\newtheorem{thm}{Theorem}[section]
\newtheorem{lemma}[thm]{Lemma}
\newtheorem{lem}[thm]{Lemma}
\newtheorem{prop}[thm]{Proposition}
\newtheorem{defn}[thm]{Definition}
\newtheorem{cor}[thm]{Corollary}
\newtheorem{remark}[thm]{Remark}
\author[Iv\'an Angiono]{Iv\'an Angiono$^{\star}$}
\thanks{$^{\star}$ Partially supported by CONICET, FONCyT-ANPCyT, Secyt (UNC)}
\address{FaMAF-CIEM (CONICET), Universidad Nacional de C\'ordoba, Medina Allende s/n, 5000 C\'ordoba, Argentina}
\email{angiono@famaf.unc.edu.ar}
\author[Mikhail Kochetov]{Mikhail Kochetov$^{\star\star}$}
\address{Department of Mathematics and Statistics,
Memorial University of Newfoundland, St. John's, NL, A1C5S7, Canada}
\email{mikhail@mun.ca}
\author[Mitja Mastnak]{Mitja Mastnak$^{\star\star}$}
\thanks{$^{\star\star}$Supported by the Natural Sciences and Engineering Research Council (NSERC) of Canada, Discovery Grants 341792-2013 and 371994-2014.}
\address{Department of Mathematics and Computing Science,
Saint Mary's University, 923 Robie Street, Halifax, NS, B3H3C3, Canada}
\email{mmastnak@cs.smu.ca}
\title{On rigidity of Nichols algebras}
\subjclass[2010]{Primary 17B37; Secondary 16E40, 16S80, 16T20}
\keywords{Braided algebra, Nichols algebra, graded deformation, bialgebra cohomology, generalized Lie structures}
\def\ep{\varepsilon}
\def\ot{\otimes}
\def\De{\Delta}
\def\na{\partial}
\def\gr{\operatorname{gr}}
\def\id{\operatorname{id}}
\def\Hom{\operatorname{Hom}}
\def\ad{\operatorname{ad}}
\def\coh{\operatorname{H}}
\def\Tot{\operatorname{Tot}}
\def\Ker{\operatorname{Ker}}
\def\m{\operatorname{m}}
\def\Homv{\operatorname{Hom}}
\def\B{\textbf{B}}
\def\F{\textbf{F}}
\def\bG{\textbf{G}}
\newcommand{\V}{\mathcal{V}}
\newcommand{\M}{\mathcal{M}} 
\newcommand{\yd}{\mathcal{YD}}
\newcommand\ord{\operatorname{ord}}
\newcommand{\vi}{\textbf{(i)} }
\newcommand{\vii}{\textbf{(ii)} }
\newcommand{\viii}{\textbf{(iii)} }
\newcommand{\viv}{\textbf{(iv)} }
\newcommand{\vv}{\textbf{(v)} }
\newcommand{\ydg}{^{\kk\Gamma}_{\kk\Gamma}\mathcal{YD}}
\newcommand\G{\mathbb{G}}
\newcommand\Z{\mathbb{Z}}
\newcommand\N{\mathbb{N}}
\def\cI{\mathcal{I}}
\def\cB{\mathcal{B}}
\def\cU{\mathcal{U}}
\def\cO{\mathcal{O}}
\def\cR{\mathcal{R}}
\def\cD{\mathcal{D}}
\newcommand{\kk}{\Bbbk}
\newcommand\supp{\operatorname{supp}}
\def\ot{\otimes}
\begin{document}

\begin{abstract}
We study deformations of graded braided bialgebras using cohomological methods.
In particular, we show that many examples of Nichols algebras, including the finite-dimensional ones
arising in the Andruskiewitsch-Schneider program of classification of pointed Hopf algebras,
are rigid. This result can be regarded as nonexistence of ``braided Lie algebras'' with nontrivial bracket.
\end{abstract}

\maketitle

\section{Introduction}

Let $\kk$ be a field of characteristic $0$ and $V$ a $\kk$-vector space. The symmetric algebra $S(V)=\bigoplus_{n\ge 0}S^n(V)$ is a graded bialgebra
by declaring the elements of $V$ {\em primitive}, i.e. $\Delta(x)=x\otimes 1+1\otimes x$ for all $x\in V$, and
extending to a morphism of (unital) algebras $\Delta\colon S(V)\to S(V)\ot S(V)$.
Then Lie brackets on $V$ are in one-to-one correspondence with graded deformations of $S(V)$ as a bialgebra (or just as an augmented algebra).

We are interested in graded deformations of bialgebras generalizing $S(V)$, namely, the Nichols algebras of braided vector spaces, which have become
prominent in the theory of Hopf algebras (see the survey \cite{AS-survey} and references therein). Recall that a {\em braided vector space}
is a vector space $V$ equipped with a linear isomorphism $c\colon V\ot V\to V\ot V$ that satisfies the {\em braid equation}
\[
(c\ot\id)(\id\ot c)(c\ot\id)=(\id\ot c)(c\ot\id)(\id\ot c),
\]
where $\id=\id_V$. The {\em Nichols algebra} of $(V,c)$, denoted by $\cB(V,c)$ or just $\cB(V)$ if the braiding is clear from the context,
is the unique (up to isomorphism) graded braided bialgebra
$\cB=\bigoplus_{n\ge 0} \cB_n$ with $\cB_0=\kk$, $\cB_1=V$ such that the restriction of the braiding of $\cB$ to $V$ is $c$, $\cB$ is generated by $V$ as an algebra, and $V$ coincides with the space $P(\cB)$ of primitive elements of $\cB$.

In the case of {\em symmetric} braiding, i.e., $c^2=\id$, the concept of braided Lie algebra is well understood \cite{Gu,BFM1,Kharch,Ko,KS_Lie}.
This includes the usual Lie algebras (when $c$ is the flip $v\ot w\mapsto w\ot v$), Lie superalgebras
(when $V$ is graded by $\Z_2$ and $c$ is the signed flip $v\ot w\mapsto (-1)^{p(v)p(w)}w\ot v$ where $p$ denotes parity) and color Lie superalgebras.
It follows from Kharchenko's version of PBW Theorem \cite[Theorem 7.1]{Kharch} that such Lie structures on $(V,c)$ are in one-to-one correspondence with graded deformations of $\cB(V,c)$
as a braided bialgebra with a fixed braiding (see Section \ref{s:symmetric}).

It is an important and difficult question for what finite-dimensional braided vector spaces the Nichols algebra is also finite-dimensional.
This condition puts severe restrictions on $c$. For example, in the case of signed flip, this happens if and only if the even part of $V$ is zero,
in which case the Nichols algebra is the exterior algebra $\Lambda(V)$ and there are no nontrivial graded deformations.

We believe that such rigidity is typical for finite-dimensional Nichols algebras.
We establish it for a wide class of symmetric braidings (Theorem \ref{thm:symmetric})
using the description of finite-dimensional triangular Hopf algebras by Etingof and Gelaki \cite{EG1,Ge,EG2}.
We also establish a sufficient condition of rigidity (Theorem \ref{thm:suff_cond}) using cohomological techniques,
and verify that it is satisfied for finite-dimensional Nichols algebras in the Yetter-Drinfeld category $\ydg$
over an abelian group $\Gamma$ (Theorem \ref{thm:homzero}) using a description of these Nichols algebras in terms of generators and relations \cite{Ang-crelle}.
It follows that any finite-dimensional Nichols algebra arising from a diagonal braiding, i.e., a braiding of the form $c(x_i\ot x_j)=q_{ij}x_j\ot x_i$
where $\{x_1,\ldots,x_\theta\}$ is a basis of $V$ and $q_{ij}\in\kk^\times$, does not admit nontrivial graded deformations (Theorem \ref{thm:diagonal}).

It should be mentioned that the so-called {\em bosonizations} of these Nichols algebras often admit nontrivial graded deformations (or ``liftings''),
as has been shown by Andruskiewitsch and Schneider in the course of their program of classification of pointed Hopf algebras \cite{AS1}.

Our sufficient condition also applies to some interesting infinite-dimensional Nichols algebras (see Section \ref{s:other}) and other braided bialgebras close to Nichols algebras (Theorem \ref{thm:diagonal-pre-Nichols}).
This may explain the difficulty of constructing new examples in \cite{Ar}, where an attempt is made to define and study braided Lie algebras for non-symmetric braiding.

\section{Preliminaries}

\subsection{Braided tensor categories}

It is often more convenient to work in a category rather than with a stand-alone braided vector space.
By a {\em tensor category} we always mean a strict monoidal $\kk$-linear category, see e.g. \cite{McL} for details.
We are mostly interested in categories of $\kk$-vector spaces endowed with some additional structure.
To simplify notation, we omit associativity isomorphisms and parentheses in tensor products.
In particular, we denote the tensor powers of an object $V$ by $V^{\ot n}$ for all $n\geq 0$, where $V^{\ot 0}$ is the unit object.

A {\em braided tensor category} is a tensor category $\V$ with a {\em braiding},
i.e. a natural family of isomorphisms $c_{V,W}\colon V\ot W\to W\ot V$ in $\V$ satisfying the so-called hexagon axioms:
\[
\begin{array}{lcl}
c_{U,V\ot W}=(\id_V\ot c_{U,W})(c_{U,V}\ot \id_W)&\text{and}& 
c_{U\ot V,W}=(c_{U,W}\ot \id_V)(\id_U\ot c_{V,W}),
\end{array}
\]
for all $U,V,W$ in $\V$. The braid equation follows: 
\[
(c_{V,W}\ot\id_U)(\id_V\ot c_{U,W})(c_{U,V}\ot \id_W)=(\id_W\ot c_{U,V})(c_{U,W}\ot \id_V)(\id_U\ot c_{V,W}).
\] 
The category is said to be {\em symmetric} if  $c_{W,V} c_{V,W}=\id_{V\ot W}$ for all $V$, $W$ in $\V$.

The most well known braided tensor categories are the category of (co)modules over a (co)quasitriangular bialgebra and
the category of Yetter-Drinfeld modules over a Hopf algebra with bijective antipode. We will now briefly recall the relevant definitions and fix notation;
details can be found in textbooks such as \cite{M,KS}. We use the standard Sweedler notation for coalgebras and comodules.

A {\em coquasitriangular (CQT) bialgebra} is a pair $(H,\beta)$ where $H$ is a bialgebra and $\beta$
is a bilinear form $H\times H\to\kk$ that is invertible with respect to convolution and satisfies
\begin{align*}
\beta(h_{(1)},k_{(1)})h_{(2)}k_{(2)}&=\beta(h_{(2)},k_{(2)})k_{(1)}h_{(1)},\\
\beta(hk,\ell)&=\beta(h,\ell_{(1)})\beta(k,\ell_{(2)}),\\
\beta(\ell,hk)&=\beta(\ell_{(2)},h)\beta(\ell_{(1)},k),
\end{align*}
for all $h,k,\ell\in H$. The category of right comodules $\M^H$ is braided as follows:
\begin{align}\label{eq:braiding_CQT}
c_{V,W}(v\ot w) &= \beta(v_{(1)},w_{(1)})w_{(0)}\ot v_{(0)}, & \mbox{for all }v\in V, & \, w\in W.
\end{align}
Similarly, the category of left comodules ${}^H\M$ is braided by
\begin{align*}
c_{V,W}(v\ot w) &= \beta(w_{(-1)},v_{(-1)})w_{(0)}\ot v_{(0)}, & \mbox{for all }v\in V,& \, w\in W.
\end{align*}
If $G$ is a group then the Hopf algebra $H=\kk G$ admits a CQT structure $\beta$ if and only if $G$ is abelian.
In this case the possible maps $\beta$ are just linear extensions of bicharacters $G\times G\to\kk^\times$.
Right $H$-comodules are just $G$-graded vector spaces,
$V=\bigoplus_{g\in G} V_g$, and the braiding is given by $v\ot w\mapsto\beta(g,h) w\ot v$ for all $v\in V_g$, $w\in W_h$, $g,h\in G$.

An object $V$ of the Yetter-Drinfeld category ${}^H_H\yd$ is simultaneously a left module and a left comodule such that the following compatibility condition holds:
\begin{align*}
h_{(1)}v_{(-1)}\ot h_{(2)}\cdot v_{(0)}&=(h_{(1)}\cdot v)_{(-1)}h_{(2)}\ot(h_{(1)}\cdot v)_{(0)} &
\mbox{for all }&v\in V, \, h\in H.
\end{align*}
A morphism is a linear map preserving both action and coaction. The braiding is given by
\[
c_{V,W}\colon v\ot w\mapsto  v_{(-1)}\cdot w\ot v_{(0)}.
\]
The category of right Yetter-Drinfeld modules $\yd^H_H$ is defined in a similar manner.
If $\Gamma$ is a group and $H=\kk\Gamma$ then an object in ${}^H_H\yd$ is just a $\Gamma$-graded vector space with a left action of $\Gamma$
such that $g\cdot V_h=V_{ghg^{-1}}$, for all $g,h\in\Gamma$. The braiding is given by $v\ot w\mapsto g\cdot w\ot v$, for all $v\in V_g$, $w\in W$.
In particular, if $\Gamma$ is abelian then the semisimple objects in $\yd^H_H$ are vector spaces graded by the direct product $\Gamma\times\widehat{\Gamma}$
where $\widehat{\Gamma}$ is the character group of $\Gamma$.
For a vector space $V$ with such a grading, we will denote the homogeneous component of degree $(g,\chi)$ by $V^\chi_g$.
The braiding becomes $v\ot w\mapsto\psi(g) w\ot v$, for all $v\in V^\chi_g$ and $w\in W^\psi_h$.

If a CQT bialgebra $(H,\beta)$ is a Hopf algebra then its antipode is bijective.
Moreover $\M^H$ can be regarded as a full subcategory of the Yetter-Drinfeld category $\yd^H_H$ if we define the right action of $H$
on a right comodule $V$ by means of the usual left action of $H^*$ and the homomorphism of algebras $H^\mathrm{op}\to H^*\colon h\mapsto\beta(\cdot,h)$, i.e.,
$v\cdot h=\sum\beta(v_{(1)},h)v_{(0)}$, for all $v\in V$, $h\in H$. Similarly, ${}^H\M$ can be regarded as a full subcategory of ${}^H_H\yd$.

If $(U,c)$ is a finite-dimensional braided vector space then the FRT construction \cite{KS,Tk} yields a CQT bialgebra $(H,\beta)$
such that $U\in\M^H$ and $c=c_{U,U}$ where $c_{U,U}$ is given by \eqref{eq:braiding_CQT}. Moreover, for any $V,W\in\M^H$ and a linear map $f\colon V\to W$ that
{\em commutes with the braiding with $U$} in the sense that $(f\ot\id)c_{U,V}=c_{U,W}(\id\ot f)$ and $(\id\ot f)c_{V,U}=c_{W,U}(f\ot\id)$,
there exists a biideal $I$ of $H$ contained in the left and right kernels of the bilinear form $\beta$
such that $f$ is a morphism in $\M^{H/I}$ \cite[Corollary 1.9]{Tk}. Hence, replacing $(H,\beta)$ by $(\bar{H},\bar{\beta})$,
where $\bar{H}$ is the quotient of $H$ by the largest biideal contained in the left and right kernels of $\beta$ and where $\bar{\beta}$ is induced by $\beta$,
we obtain a braided category, $\M^{\bar{H}}$, that contains $(U,c)$ and all linear maps that commute with the braiding with $U$.

There is a Hopf algebra version of the above construction --- see e.g. \cite{Tk} and references therein --- for braided vector spaces
satisfying a certain condition, called {\em rigidity} in \cite{Tk}, which allows us to define the braiding operators $c_{U,U^*}$, $c_{U^*,U}$ and $c_{U^*,U^*}$,
where $U^*$ is the dual space. Namely, there exists a CQT Hopf algebra $(H,\beta)$ such that $U\in\M^H$ and $c=c_{U,U}$.
Again, any linear map that commutes with the braiding with $U$ can be included in the category $\M^{H/I}$ where $I$ is a Hopf ideal contained
in the left and right kernels of $\beta$, see the proof of \cite[Proposition 5.4]{Tk}. Since the largest biideal contained in the kernels of
$\beta$ is automatically a Hopf ideal, we obtain a CQT Hopf algebra $\bar{H}$ such that $\M^{\bar{H}}$ includes $(U,c)$ and
all linear maps that commute with the braiding with $U$.

We are especially interested in the case of diagonal braiding: $c(x_i\ot x_j)=q_{ij}x_j\ot x_i$
where $\{x_1,\ldots,x_\theta\}$ is a basis of $U$ and $q_{ij}\in\kk^\times$. Here we can take $H=\kk G$, where $G$ is the free abelian group $\Z^\theta$, and
define the bicharacter $\beta$ by setting $\beta(e_i,e_j)=q_{ij}$, where $\{e_1,\ldots,e_\theta\}$ is the standard basis of $\Z^\theta$.
If we make $U$ a $G$-graded vector space by declaring $x_i\in U_{e_i}$ then we get $c=c_{U,U}$ in $\M^H$. Alternatively, we can make $U$ an object
of $\ydg$ for each abelian group $\Gamma$ containing elements $g_1,\ldots,g_\theta$ such that there exist characters
$\chi_1,\ldots,\chi_\theta\in\widehat{\Gamma}$ satisfying $\chi_j(g_i)=q_{ij}$; then we declare $x_i\in U^{\chi_i}_{g_i}$ and get $c=c_{U,U}$ in $\ydg$.
We can choose the group $\Gamma$ so that it is generated by $g_1,\ldots,g_\theta$ and the characters $\chi_1,\ldots,\chi_\theta$ separate points of $\Gamma$.
It is easy to see that in this case a linear map $f\colon V\to W$ commutes with the braiding with $U$ if and only if $f$ is a morphism in $\ydg$.

\subsection{Braided bialgebras}\label{ss:braided_bialg}

A {\em bialgebra} in a braided tensor category $\V$ with unit object $\mathbb{1}$ is an object $\cB$ with four morphisms: multiplication $m\colon \cB\ot \cB\to \cB$,
unit $u\colon \mathbb{1}\to \cB$, comultiplication $\Delta\colon \cB\to \cB\ot \cB$ and counit $\ep\colon \cB\to \mathbb{1}$ such that
$(\cB,m,u)$ is a unital algebra, $(\cB,\Delta,\ep)$ is a counital coalgebra, and the following compatibility conditions hold:
\begin{align*}
\Delta m&=(m\ot m)(\id_\cB\ot c_{\cB,\cB}\ot\id_\cB)(\Delta\ot\Delta), &  \ep u&=\id_{\mathbb{1}}, & \ep m&=\ep\ot\ep, & \Delta u&=u\ot u.
\end{align*}
Note that the braiding appears only in the compatibility condition involving $m$ and $\Delta$.

One can define a {\em braided bialgebra} without reference to any categories \cite{Tk}: it is a braided vector space $(\cB,c)$ with four linear maps,
$m\colon \cB\ot \cB\to \cB$, $u\colon\kk\to \cB$, $\Delta\colon \cB\to \cB\ot \cB$ and $\ep\colon \cB\to\kk$, that commute with 
the braiding induced by $c$ among the tensor powers of $\cB$ and satisfy the following conditions:
$(\cB,m,u)$ is a unital algebra, $(\cB,\Delta,\ep)$ is a counital coalgebra,
$u$ is a counital coalgebra map, $\ep$ is a unital algebra map, and finally
$\Delta m=(m\ot m)(\id_\cB\ot c\ot\id_\cB)(\Delta\ot\Delta)$.

Obviously, a bialgebra $\cB$ in a braided tensor category consisting of vector spaces and linear maps (such as $\M^H$ or ${}^H_H\yd$) satisfies
the definition of braided bialgebra with $c=c_{\cB,\cB}$. Conversely, it is shown in \cite{Tk} that any finite-dimensional braided bialgebra
$(\cB,m,u,\Delta,\ep,c)$ can be included in the category $\M^H$ over a suitable CQT bialgebra (Hopf algebra if $c$ is rigid) $H$ such that
$m,u,\Delta,\ep$ are morphisms in $\M^H$ and $c=c_{\cB,\cB}$ in $\M^H$.

We are mainly interested in the case of the Nichols algebra $\cB(V)$ of a finite-dimensional vector space $V$ with a rigid braiding $c$,
which is a braided Hopf algebra, not necessarily finite-dimensional but equipped with a grading over non-negative integers whose components are finite-dimensional. 
It can be constructed as the quotient of the tensor algebra $T(V)$ by a graded biideal $\cI(V)$ \cite[Proposition 2.2]{AS-survey}, which is determined by the braiding $c$;
indeed the homogeneous components of $\cI(V)$ are the kernels of the so-called {\em quantum symmetrizers} on the tensor powers of $V$
\cite[Proposition 2.11]{AS-survey}. This construction can be carried out either with the stand-alone braided vector space $(V,c)$ or in a suitable
braided category of comodules or Yetter-Drinfeld modules.

\subsection{Graded deformations and liftings}

We review the theory of formal graded deformations and liftings from \cite{MW}, but in a slightly more general setting.
The theory of formal bialgebra deformations was introduced by Gerstenhaber and Schack \cite{GS},
while the graded version and its connection to liftings was considered by Du, Chen and Ye \cite{DCY}. In this context, a {\em graded bialgebra} 
will mean a bialgebra $\cB$ in a braided tensor category $\V$ (consisting of vector spaces and linear maps) equipped with a grading, as an object in $\V$, over non-negative integers, $\cB=\bigoplus_{n\ge 0}\cB_n$, which is at the same time an algebra and a coalgebra grading, i.e., $\cB_i\cB_j\subseteq\cB_{i+j}$ and $\Delta(\cB_k)\subseteq\bigoplus_{i+j=k}\cB_i\ot\cB_j$, for all $i,j,k\ge 0$.

Let $t$ be an indeterminate and consider the polynomial algebra $\kk[t]$ equipped with its standard grading, i.e., $t$ has degree $1$. 
By extending scalars from $\kk$ to $\kk[t]$, the braided tensor category $\V$ gives rise to the braided tensor category $\V_{\kk[t]}$.
A (formal) {\em graded deformation} of a graded bialgebra $(\cB,m,\Delta)$ in $\V$ is a $\kk[t]$-linear graded structure
$(m_t, \Delta_t)$ on $\cB[t]=\cB\ot\kk[t]$ such that $(\cB[t],m_t,\Delta_t)$ is a graded bialgebra in $\V_{\kk[t]}$. 

We say that two graded deformations, $(\cB[t], m_t, \Delta_t)$ and $(\cB[t], m'_t, \Delta'_t)$, are \emph{equivalent}
if there exists a $\kk[t]$-linear graded bialgebra isomorphism $f\colon (\cB[t], m_t, \Delta_t)\to (\cB[t], m'_t, \Delta'_t)$. 

A {\em lifting} $(\cU,\pi)$ of $\cB$ consists of a filtered bialgebra $\cU$ and a filtered vector space isomorphism $\pi\colon \cU\to \cB$ such that
$\gr\pi\colon \gr \cU\to \gr \cB = \cB$ is an isomorphism of graded bialgebras.  An \emph{equivalence} between liftings $(\cU,\pi)$ and $(\cU',\pi')$
is a filtered bialgebra isomorphism $f\colon \cU\to \cU'$ such that $\gr\pi \circ \gr f = \gr \pi'$.

A graded deformation is given by a sequence of pairs of maps
$(m_i, \Delta_i)$, $i\geq 0$, of degree $-i$ such that
$m_t|_{\cB\otimes \cB}=m+\sum_{i\ge 1} m_it^i$ and $\Delta_t|_{\cB}=
\Delta+\sum_{i\ge 1} \Delta_i t^i$.  We also denote $(m_0,\Delta_0)=
(m,\Delta)$.  A graded deformation $(\cB[t], m_t, \Delta_t)$ defines a lifting
$(\cU,\pi)$, where $\cU$ is $\cB$ as a filtered vector space, $\pi$ is identity, and
$(m_\cU,\Delta_\cU)=(m_t,\Delta_t)|_{t=1}$.

If $(\cU,\pi)$ is a lifting, then the linear maps
$\tilde{m}\colon \cB\otimes \cB\stackrel{\pi^{-1}\otimes \pi^{-1}}{\longrightarrow} \cU\ot \cU \stackrel{m_\cU}{\to} \cU\stackrel{\pi}{\to} \cB$
and $\tilde{\Delta}\colon \cB \stackrel{\pi^{-1}}{\to} \cU \stackrel{\Delta_\cU}{\to} \cU\ot \cU\stackrel{\pi\ot\pi}{\longrightarrow} \cB\ot \cB$
decompose into direct sums of homogeneous components $m_i, \Delta_i$ of degrees $-i$ for $i\geq0$,
and the structure maps $(m_t, \Delta_t)=(\sum_i m_i t^i, \sum_i \Delta_i t^i)$ on $\cB[t]$ define a formal graded deformation of $\cB$.

Up to equivalence, these correspondences are inverses of each other.

\subsection{Graded bialgebra cohomology}

Let $\cB$ be a bialgebra in $\V$. Consider the bisimplicial complex $\B=(\B^{p,q})_{p,q\ge 0}$,
\[
\B^{p,q}=\Homv(\cB^{\ot p}, \cB^{\ot q}).
\]
The left and right diagonal actions and coactions of $\cB$ on $\cB^{\ot n}$ will be denoted by $\lambda_l, \lambda_r, \rho_l,\rho_r$, respectively.
Note that they involve the braiding. The horizontal faces
\[
\na^h_i\colon \Homv(\cB^{\ot p}, \cB^{\ot q})\to \Homv(\cB^{\ot (p+1)}, \cB^{\ot q})
\]
and degeneracies
\[
\sigma^h_i\colon \Homv(\cB^{\ot (p+1)}, \cB^{\ot q})\to \Homv(\cB^{\ot p}, \cB^{\ot q})
\]
are those for computing Hochschild cohomology:
\begin{eqnarray*}
\na^h_{0} f&=&\lambda_l(\id\ot f),\\
\na^h_{i} f&=& f(\id\ot\ldots\ot m\ot\ldots \ot \id),\; 1\le i\le p,\\
\na^h_{p+1}f&=& \lambda_r(f\ot \id),\\
\sigma^h_{i} f&=& f(\id\ot\ldots\ot u\ot\ldots\ot \id);
\end{eqnarray*}
the vertical faces
\[
\na^c_j\colon \Homv(\cB^{\ot p}, \cB^{\ot q})\to \Homv(\cB^{\ot p}, \cB^{\ot (q+1)})
\]
and degeneracies
\[
\sigma^c_j\colon \Homv(\cB^{\ot p}, \cB^{\ot (q+1)})\to \Homv(\cB^{\ot p}, \cB^{q})
\]
are those for computing coalgebra (Cartier) cohomology:
\begin{eqnarray*}
\na^c_{0} f&=&(\id\ot f)\rho_l,\\
\na^c_{j} f&=& (\id\ot\ldots\ot \De\ot\ldots \ot \id)f,\; \ 1\le j\le q,\\
\na^c_{q+1}f&=& (f\ot \id)\rho_r,\\
\sigma^c_{i} f&=& (\id\ot\ldots\ot\ep\ot\ldots\ot \id)f.
\end{eqnarray*}
The vertical and horizontal differentials are given by the usual alternating sums
\[
\na^h=\sum (-1)^i\na^h_i, \qquad \na^c=\sum (-1)^j\na^c_j.
\]
By abuse of notation we identify a cosimplicial bicomplex with its associated cochain bicomplex.
The {\em bialgebra cohomology} of $\cB$ is then defined as
\[
\coh_{\mathrm{b}}^*(\cB)=\coh^*(\Tot\B).
\]
where
\[
\Tot\B = \B^{0,0}\to \B^{1,0}\oplus\B^{0,1}\to\ldots \to\bigoplus_{p+q=n} \B^{p,q}\stackrel{\partial^b}{\to}\ldots
\]
and $\partial^b$ is given by the sign trick
(i.e., $\partial^b|_{\B^{p,q}}=\partial^h\oplus (-1)^p\partial^c\colon\B^{p,q}\to \B^{p+1,q}\oplus\B^{p,q+1}$).

Let $\B_0$ denote the bicomplex obtained from $\B$ by replacing the edges
by zeroes, i.e., $\B_0^{p,0}=0=\B_0^{0,q}$ for all $p,q$.
The {\em truncated bialgebra cohomology} is
\[
\widehat{\coh}_{\mathrm{b}}^*(\cB)=\coh^{*+1}(\Tot\B_0).
\]
For computations, it is convenient to use the normalized bicomplex $\B^+$, which is obtained from the cochain bicomplex $\B$
by replacing $\B^{p,q}=\Homv(\cB^{\ot p},\cB^{\ot q})$ with the intersection of degeneracies
\[
(\B^+)^{p,q}=(\cap \Ker\sigma^h_i)\cap (\cap \Ker\sigma^c_j)\simeq\Homv((\cB^+)^{\ot p},(\cB^+)^{\ot q}),
\]
where $\cB^+=\ker(\varepsilon)$. This change does not affect the cohomology.

We can describe the first two cohomology groups as follows:
\[
\widehat{\coh}_{\mathrm{b}}^1(\cB)=\{f\colon \cB^+\to \cB^+\;|\;f(ab)=af(b)+f(a)b, \ \De f(a)=a_{(1)}\ot f(a_{(2)})+f(a_{(1)})\ot a_{(2)}\}
\]
and
\[
\widehat{\coh}_{\mathrm{b}}^2(\cB)=\widehat{{\rm Z}}_{\mathrm{b}}^2(\cB)/\widehat{{\rm B}}_{\mathrm{b}}^2(\cB),
\]
where
\begin{align}
\widehat{{\rm Z}}_{\mathrm{b}}^2(\cB)= \big\{(f,g)\;\big|\;& f\colon \cB^+\ot \cB^+\to \cB^+,\  g\colon \cB^+\to \cB^+\ot \cB^+,\nonumber\\
&af(b,c)+f(a,bc)=f(ab,c)+f(a,b)c,\label{eq-assoc} \\
&c_{(1)}\ot g(c_{(2)})+(\id\ot\De)g(c)=(\De\ot\id)g(c)+g(c_{(1)})\ot c_{(2)},\label{eq-coassoc} \\
&(f\ot m)\De(a\ot b) -\De f(a,b)+(m\ot f)\De(a\ot b) = \label{eq-compat}\\
&\phantom{(f\ot m)\De(a\ot b)} -(\De a)g(b)+g(ab)-g(a)(\De b) \nonumber \big\}
\end{align}
and
\begin{align*}
\widehat{{\rm B}}_{\mathrm{b}}^2(\cB)= \big\{(f,g)\;\big|\; \exists h\colon \cB^+\to \cB^+, \,\, &f(a,b)=ah(b)-h(ab)+h(a)b, \\
   &g(c)=-c_{(1)}\ot h(c_{(2)})+\De h(c) -h(c_{(1)})\ot c_{(2)} \big\},
\end{align*}
where the elements $a,b,c$ range over $\cB^+$. All maps above are assumed to be morphisms in $\V$.
By $\De(a\ot b)$ we mean the braided coproduct in $\cB\ot\cB$, namely, $(\id\ot c_{\cB,\cB}\ot\id)(a_{(1)}\ot a_{(2)}\ot b_{(1)}\ot b_{(2)})$,
and we write $f(-,-)$ instead of $f(-\ot -)$.
In the resulting deformation (see the next subsection), Equation \eqref{eq-assoc} will correspond to associativity,
Equation \eqref{eq-coassoc} to coassociativity and Equation \eqref{eq-compat} to compatibility.

Now assume that $\cB$ is $\Z$-graded and let $\B_{\ell}$ denote the subcomplex of $\B$ consisting of homogeneous maps of degree $\ell$, i.e.,
\[
\B^{p,q}_{\ell}=\Homv(\cB^{\ot p},\cB^{\ot q})_\ell=\{f\colon \cB^{\ot p}\to \cB^{\ot q}\;|\;f\mbox{ is
homogeneous of degree } \ell\}.
\]
Complexes $(\B_0)_{\ell}$, $\B^+_{\ell}$ and $(\B^+_0)_{\ell}$ are defined analogously.
The graded bialgebra and truncated graded bialgebra cohomologies are then defined by:
\begin{align*}
&\coh^*_{\mathrm{b}}(\cB)_\ell=\coh^*(\Tot\B_{\ell})=\coh^*(\Tot\B^+_{\ell}),\\
&\widehat{\coh}_{\mathrm{b}}^*(\cB)_\ell=\coh^{*+1}(\Tot(\B_0)_{\ell})=\coh^{*+1}(\Tot(\B^+_0)_{\ell}).
\end{align*}
Note that if the support of the grading is finite, in particular if $\cB$ is finite-dimensional, then
\[
\coh_{\mathrm{b}}^*(\cB)=\bigoplus_{\ell\in\Z} \coh_{\mathrm{b}}^*(\cB)_\ell \ \mbox{ and } \
\widehat{\coh}_{\mathrm{b}}^*(\cB)=\bigoplus_{\ell\in\Z} \widehat{\coh}_{\mathrm{b}}^*(\cB)_\ell.
\]

\subsection{Cohomological aspects of graded deformations}

Given a graded deformation of $\cB$, let $r$ be the smallest positive
integer for which $(m_r, \Delta_r)\neq (0,0)$ (if such an $r$ exists).
Then $(m_r,\Delta_r)$ is a $2$-cocycle in $\widehat{{\rm Z}}^2_{\mathrm{b}}(\cB)_{-r}$.
Every nontrivial deformation is equivalent to one for which the
corresponding $(m_r, \Delta_r)$ represents a nontrivial cohomology class \cite{GS, DCY}.
Hence, if $\widehat{\coh}_{\mathrm{b}}^2(\cB)_{(\ell)}=0$ for all $\ell<0$, then $\cB$ is {\em rigid},
i.e., has no nontrivial graded deformations.

Conversely, given a positive integer $r$ and a $2$-cocycle
$(m',\Delta')$ in $\widehat{{\rm Z}}^2_{\mathrm{b}}(\cB)_{-r}$, the maps
$m+t^r m'$ and $\Delta+t^r\Delta'$ define a bialgebra structure on
$\cB[t]/(t^{r+1})$ over $\kk[t]/(t^{r+1})$.
There may or may not exist $(m_{r+k}, \De_{r+k})$, $k\geq 1$,
for which $m_t=m+t^rm'+\sum_{k\geq 1}t^{r+k}m_{r+k}$ and
$\De_t=\De+t^r\De'+\sum_{k\geq 1}t^{r+k}\De_{r+k}$
make $\cB[t]$ into a bialgebra over $\kk[t]$.

An {\em $r$-deformation} of $\cB$ is a graded deformation
of $\cB$ over $\kk[t]/(t^{r+1})$, i.e. a pair $(m^r_t,\Delta^r_t)$ defining a bialgebra structure on
$\cB[t]/(t^{r+1})$ over $\kk[t]/(t^{r+1})$ such that $(m^r_t,\Delta^r_t)|_{t=0}=(m,\Delta)$.
For any $2$-cocycle $(m',\Delta ')$ in $\widehat{{\rm Z}}^2_{\mathrm{b}}(\cB)_{-r}$,
there exists an $r$-deformation, given by $(m+t^rm', \Delta + t^r \Delta')$.

If a given $(r-1)$-deformation can be extended to an $r$-deformation, then all
ways of doing so are parametrized by $\widehat{\coh}_{\mathrm{b}}^2(\cB)_{-r}$.  More precisely, suppose that
$(\cB[t]/(t^r), \ m_t^{r-1}, \ \Delta_t^{r-1})$ is an
$(r-1)$-deformation, where
\begin{align*}
m_t^{r-1}&=m+t m_1+\ldots+t^{r-1} m_{r-1}, &  \Delta_t^{r-1}&=\Delta+t\Delta_1+\ldots +t^{r-1}\Delta_{r-1}.
\end{align*}
If
\[
D=(\cB[t]/(t^{r+1}), m_t^{r-1}+t^{r} m_{r},\Delta_t^{r-1}+t^{r}\Delta_{r})
\]
is an $r$-deformation, then
\[
D'=(\cB[t]/(t^{r+1}), m_t^{r-1}+t^{r}m'_{r},\Delta_t^{r-1}+t^{r}\Delta'_{r})
\]
is an $r$-deformation if and only if
$(m'_{r}-m_{r},\Delta'_{r}-\Delta_{r})\in\widehat{{\rm Z}}_{\mathrm{b}}^2(\cB)_{-r}.$
Note also that if $(m'_{r}-m_{r},\Delta'_{r}-\Delta_{r})\in\widehat{{\rm B}}_{\mathrm{b}}^2(\cB)_{-r}$, then deformations $D$ and $D'$ are
equivalent.

The obstruction to extend $r$-deformations to $(r+1)$-deformations lies in $\widehat{\coh}_{\mathrm{b}}^3(\cB)_{-r}$.

\section{The case of symmetric braiding}\label{s:symmetric}

Let $(V,c)$ be a braided vector space with $c^2=\id$. Then $\cB(V)$ is a quadratic algebra: it is the quotient of $T(V)$ by the ideal generated by
the elements $x\ot y-c(x\ot y)$, for $x,y\in V$.
If $c$ is the flip (respectively, signed flip) then $\cB(V)=S(V)$ (respectively, $S(V_0)\ot\Lambda(V_1)$) and the graded deformations of $\cB(V)$
are in one-to-one correspondence with brackets $[\,,\,]\colon V\ot V\to V$ making $V$ a Lie algebra (respectively, superalgebra).
For arbitrary $c$, we need the following generalization of Lie algebra introduced by Gurevich \cite{Gu} under the name ``Lie $c$-algebra''.

\begin{defn}
Let $L$ be a vector space, $c\colon L\ot L\to L\ot L$ a symmetric braiding, and $[\,,\,]\colon L\ot L\to L$ a linear map.
Then $(L,[\,,\,],c)$ is a {\em braided Lie algebra} if
\begin{align*}
&c([\,,\,]\ot\id_L)=(\id_L\ot[\,,\,])(c\ot\id_L)(\id_L\ot c)&\mbox{(compatibility),}\\
&[\,,\,](\id_{L\ot L}+c)=0&\mbox{(anticommutativity)}\\
&[\,,\,]([\,,\,]\ot \id_L)\Big(\id_{L\ot L\ot L}+(c\ot\id_L)(\id_L\ot c)+(c\ot\id_L)(\id_L\ot c)\Big)=0&\mbox{(Jacobi identity).}
\end{align*}
\end{defn}

Note that the compatibility condition (together with $c^2=\id$) simply means that the bracket commutes with $c$, and the above Jacobi identity implies a
similar identity for $[\,,\,](\id_L\ot[\,,\,])$ instead of $[\,,\,]([\,,\,]\ot\id_L)$. It is straightforward to check that if a vector space $A$ is equipped with
a symmetric braiding $c$ and an associative product $m\colon A\ot A\to A$ that commutes with $c$ then $(A,[\,,\,]_c,c)$ is a braided Lie algebra,
where $[\,,\,]_c$ is the {\em braided commutator} $m(\id_{A\ot A}-c)$.

Braided Lie algebras naturally arise as Lie algebras in a symmetric tensor category $\V$. A Lie algebra in $\V$ is an object $L$ endowed with a morphism
$[\,,\,]\colon L\ot L\to L$ such that the anticommutativity and Jacobi identity hold for $c=c_{L,L}$. If $(H,\beta)$ is a
{\em cotriangular} bialgebra (i.e., a CQT bialgebra satisfying $\beta^{-1}(h,k)=\beta(k,h)$ for all $h,k\in H$) then the category $\M^H$ is symmetric;
Lie algebras in this category were introduced and studied in \cite{BFM1,BFM2} under the name {\em $(H,\beta)$-Lie algebras}.
By an argument similar to \cite{Tk} (see Subsection \ref{ss:braided_bialg} above), any finite-dimensional braided Lie algebra can be regarded
as an $(H,\beta)$-Lie algebra for a suitable cotriangular bialgebra (Hopf algebra if the braiding is rigid).

Given a braided Lie algebra $(L,[\,,\,],c)$, the {\em universal enveloping algebra}, which we will denote $\cU_c(L)$, is the quotient
of the tensor algebra $T(L)$ by the ideal generated by the degree $2$ elements $x\ot y-c(x\ot y)-[x,y]$ where $x,y\in L$.
The usual increasing filtration of $T(L)$ gives rise to the {\em standard filtration} of $\cU_c(L)$.
As one would expect, $\cU_c(L)$ becomes a braided bialgebra if we declare the elements of $L$ primitive.
It is not true in general that, given an ordered basis of $L$, the corresponding PBW monomials form a basis of $\cU_c(L)$.
However, the following version of PBW Theorem holds.

\begin{thm}{\cite[Theorem 7.1]{Kharch}}
The graded algebra $\gr \cU_c(L)$ associated to the standard filtration of
$\cU_c(L)$ is naturally isomorphic to $\cU_c(L^\circ)$ where $L^\circ$ denotes
the braided Lie algebra with the same underlying braided vector space as $L$ but with zero bracket.\qed
\end{thm}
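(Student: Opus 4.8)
The plan is to identify $\cU_c(L^\circ)$ with the Nichols algebra $\cB(L)$ and to prove that the evident graded surjection onto $\gr\cU_c(L)$ is an isomorphism. Since the braiding is symmetric, $\cB(L)$ is quadratic, the quotient of $T(L)$ by the relations $x\ot y=c(x\ot y)$, which is precisely $\cU_c(L^\circ)$. The defining relators of $\cU_c(L)$, namely $x\ot y-c(x\ot y)-[x,y]$, have top-degree component $x\ot y-c(x\ot y)$; hence in $\gr\cU_c(L)$ the image of $L$ already satisfies the relations of $\cB(L)$, and, as $\gr\cU_c(L)$ is generated in degree $1$, we obtain a surjective homomorphism of graded algebras (in fact of graded braided bialgebras) $\pi\colon\cB(L)=\cU_c(L^\circ)\twoheadrightarrow\gr\cU_c(L)$. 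This $\pi$ is natural in $L$ because both the enveloping algebra and the associated-graded constructions are functorial for morphisms of braided Lie algebras. It remains only to prove that $\pi$ is injective, equivalently that the ordered monomials spanning $\cB(L)$ stay linearly independent in $\gr\cU_c(L)$.

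I would establish injectivity by producing a left $\cU_c(L)$-module structure on $\cB(L)$ whose leading symbol is left multiplication. Concretely, I want a morphism $\rho\colon L\ot\cB(L)\to\cB(L)$ in $\V$ that extends to an action of $\cU_c(L)$ and has the form $\rho=m_{\cB}+(\text{degree-lowering correction})$, where $m_{\cB}$ is the multiplication of $\cB(L)$ restricted to $L\ot\cB(L)$ and the correction is governed by the bracket $[\,,\,]$. In characteristic $0$ with $c^2=\id$ this action can be built uniformly: the braiding makes $L^{\ot n}$ a representation of the symmetric group, the symmetrizers identify $\cB_n(L)$ with the braided-symmetric tensors inside $L^{\ot n}$, and the correction terms are assembled from the bracket contracted against these symmetric tensors. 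Granting that $\rho$ is a well-defined $\cU_c(L)$-action, evaluation at $1\in\cB_0(L)$ sends each ordered monomial of $\cB(L)$ to itself plus terms of lower degree; hence distinct monomials act by linearly independent operators and so remain independent in $\gr\cU_c(L)$. As they also span (by surjectivity of $\pi$), this forces $\pi$ to be an isomorphism.

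The main obstacle is exactly the well-definedness of $\rho$: one must check that the defining relator $x\ot y-c(x\ot y)-[x,y]$ acts as zero, i.e.\ that $\rho$ descends from $T(L)$ to $\cU_c(L)$. This is a consistency condition on triples $x\ot y\ot z$, arising from the two ways of reordering a product of three generators, and it is precisely what the axioms of a braided Lie algebra are designed to guarantee: the compatibility condition $c([\,,\,]\ot\id)=(\id\ot[\,,\,])(c\ot\id)(\id\ot c)$ ensures that the bracket correction is a morphism in $\V$ interacting correctly with the braiding, while the braided Jacobi identity ensures that the two resolutions of the triple agree. I expect this triple-overlap verification, carried out with the braiding written out explicitly, to be the technical heart of the argument; everything else is formal, and the fact that $c$ is an involution (so the reordering is controlled by the finite symmetric group) guarantees that the straightening terminates. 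Equivalently, one may phrase this step as a Diamond-Lemma confluence check for the rewriting system $x\ot y\mapsto c(x\ot y)+[x,y]$, the single overlap ambiguity resolving exactly by Jacobi and compatibility; naturality of the resulting isomorphism is then automatic from the functoriality already noted for $\pi$.
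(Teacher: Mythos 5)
First, a point of reference: the paper does not actually prove this statement. It is quoted verbatim from Kharchenko \cite[Theorem 7.1]{Kharch} with no argument, and the only proof-like remark in the paper is the later observation that cocycle twisting (Scheunert's trick, extended via Etingof--Gelaki) turns $(H,\beta)$-Lie algebras into honest Lie superalgebras, reducing PBW for that subclass to the classical super PBW theorem --- a completely different mechanism from yours. So your proposal is necessarily a different route; the issue is whether it is complete, and it is not.

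The gap is exactly the step you defer: the claim that well-definedness of the action $\rho$ is ``a Diamond-Lemma confluence check'' with ``the single overlap ambiguity resolving exactly by Jacobi and compatibility.'' Bergman's Diamond Lemma requires a monomial basis and a compatible ordering under which each rewriting strictly decreases; for a general involutive braiding $c$ (not of diagonal type) the element $c(x\ot y)$ is an arbitrary tensor in $L\ot L$, not a scalar multiple of $y\ot x$, so there are no ordered monomials and no monomial order making $x\ot y\mapsto c(x\ot y)+[x,y]$ a reduction. Indeed, your asserted termination fails outright: applying the rewriting twice to $x\ot y$ and using $c^2=\id$ together with anticommutativity $[\,,\,](\id+c)=0$ returns $x\ot y$ exactly, so naive straightening loops. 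The correct substitute (in characteristic $0$) is to split $L^{\ot 2}$ into its $c$-symmetric and $c$-antisymmetric parts and rewrite only the antisymmetric part via the bracket; then well-definedness of a $\cU_c(L)$-action on $\cB(L)=\cU_c(L^\circ)$ becomes a condition on the $S_3$-module decomposition of $L^{\ot 3}$ induced by $c$ (a Koszul-deformation condition of Braverman--Gaitsgory type). Verifying that the braided Jacobi identity, compatibility, \emph{and} anticommutativity (which your consistency discussion never invokes) imply this condition is the actual content of the theorem --- it is where Gurevich's and Kharchenko's proofs do their real work --- and your proposal asserts it rather than proves it. Likewise the correction terms of $\rho$ are never defined; ``assembled from the bracket contracted against these symmetric tensors'' is not a construction one can check anything against. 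The strategy is viable, but as written the heart of the proof is missing.
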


The standard filtration of $\cU_c(L)$ coincides with its coradical filtration. Also $\cU_c(L^\circ)=\cB(L,c)$.

It follows that graded deformations of $\cB(V,c)$ as a braided augmented algebra or as a braided bialgebra (with a fixed braiding) are in one-to-one
correspondence with brackets on $V$ making it a braided Lie algebra. Here the ``graded deformations'' and ``braided Lie algebras'' can be understood in the sense
of a stand-alone object or an object in $\M^H$ for a suitable cotriangular bialgebra $(H,\beta)$.

For $H=\kk G$, where $G$ is an abelian group, the cotriangular structures on $H$ are linear extensions of skew-symmetric bicharacters
$\beta\colon G\times G\to\kk^\times$. In this case the $(H,\beta)$-Lie algebras are known as the {\em color Lie superalgebras with grading group $G$
and commutation factor $\beta$}. Note that the braiding is diagonal and, conversely, any braided Lie algebra with a diagonal braiding can be regarded
as a color Lie superalgebra for some $G$ and $\beta$.

By a trick going back to Scheunert \cite{Sch}, color Lie superalgebras can be twisted to become ordinary Lie superalgebras.
This procedure works in the same way for all color Lie superalgebras with given $G$ and $\beta$, and is associated to a suitable cocycle twist of
$(\kk G,\beta)$ as a CQT bialgebra. Recall that a {\em right 2-cocycle} on a bialgebra $H$ is a convolution-invertible map
$\sigma\colon H\ot H\to\kk$
satisfying the following equations for all $h,k,\ell\in H$:
\begin{align*}
\sigma(h,k_{(1)}\ell_{(1)})\sigma(k_{(2)},\ell_{(2)})&=\sigma(h_{(1)}k_{(1)},\ell)\sigma(h_{(2)},k_{(2)}), &
\sigma(h,1)&=\sigma(1,h)=\varepsilon(h).
\end{align*}
Also recall that if $(H,\beta)$ is a cotriangular (more generally, CQT) bialgebra then $(H_\sigma,\beta_\sigma)$ is again a cotriangular (respectively, CQT)
bialgebra, see e.g. \cite{KS}; here $H_\sigma=H$ as a coalgebra, the multiplication of $H_\sigma$ is given by
\begin{equation*}
h\cdot_\sigma k=\sigma^{-1}(h_{(1)},k_{(1)})h_{(2)}k_{(2)}\sigma(h_{(3)},k_{(3)}),
\end{equation*}
and
\begin{equation*}
\beta_\sigma(h,k)=\sigma^{-1}(k_{(1)},h_{(1)})\beta(h_{(2)}k_{(2)})\sigma(h_{(3)},k_{(3)}).
\end{equation*}
Moreover, $\sigma$ yields an equivalence of braided tensor categories $\M^H$ and $\M^{H_\sigma}$, which is the identity on objects and morphisms
and only transforms the tensor product. If $A$ is an algebra (not necessarily associative) in $\M^H$ with
multiplication $m\colon A\ot A\to A$, then the corresponding algebra in $\M^{H_\sigma}$ is $A$ as an $H$-comodule but with new multiplication:
\begin{equation*}
m_\sigma(a\ot b)=\sigma(a_{(1)},b_{(1)})m(a_{(0)}\ot b_{(0)}).
\end{equation*}
We denote this new algebra by $A_\sigma$ and call it the $\sigma$-{\em twist} of $A$. It is shown in \cite{Ko} that multilinear polynomial identities of $A$
are preserved under $\sigma$-twist if we interpret them in each of the categories $\M^H$ and $\M^{H_\sigma}$ in terms of the appropriate action of symmetric groups
on tensor powers of $A$. In particular, associative algebras remain associative and $(H,\beta)$-Lie algebras become $(H_\sigma,\beta_\sigma)$-Lie algebras.

If $H$ is cocommutative then $H_\sigma=H$ but $\beta$ is twisted. If $H=\kk G$, with $G$ an abelian group, then there exists a $2$-cocycle
$\sigma\colon G\times G\to\kk^\times$ such that $\beta_\sigma$ is a ``sign bicharacter'':
\[
\beta_\sigma(g,h)=\left\{\begin{array}{lr}-1 & \mbox{ if }g,h\in G_-,\\ 1 & \mbox{otherwise;}\end{array}\right.
\]
where $G_-=G\setminus G_+$ and $G_+$ is a subgroup of index $\leq 2$. It follows that $\sigma$ twists any color Lie superalgebra
$L$ with commutation factor $\beta$ into a Lie superalgebra with even part $L_+$ and odd part $L_-$, where $L_\pm=\bigoplus_{g\in G_\pm}L_g$.

Etingof and Gelaki \cite{EG1} showed that, under a certain condition on the antipode called {\em pseudo-involutivity},
a cotriangular Hopf algebra $(H,\beta)$ can be twisted by a suitable cocycle to become the algebra of regular functions on a pro-algebraic group $G$ such that
$\beta_\sigma=\frac12(\ep\ot\ep+\ep\ot a+a\ot\ep-a\ot a)$ for some central element $a\in G$ with $a^2=1$.
It immediately follows \cite[Theorem 4.3]{Ko} that the same cocycle twists $(H,\beta)$-Lie algebras to Lie superalgebras equipped with a $G$-action.
Here the even and odd components are just the eigenspaces with respect to the action of $a$, with eigenvalues $1$ and $-1$ respectively.

If $H$ is finite-dimensional then pseudo-involutivity of the antipode is equivalent to involutivity and hence to semisimplicity of $H$.
Later, Etingof and Gelaki \cite{EG2,Ge} described all finite-dimensional cotriangular Hopf algebras by showing that $(H,\beta)$ can be twisted in such a way that
its dual triangular Hopf algebra becomes a ``modified supergroup algebra''. As a corollary, any $(H,\beta)$-Lie algebra is twisted to a Lie superalgebra
equipped with a supergroup action \cite[Theorem 4.6]{Ko}.

One can use the twisting procedure to transfer known properties of Lie superalgebras to $(H,\beta)$-Lie algebras in the above cases. Let $\cU_\beta(L)$ be
the universal enveloping algebra of an $(H,\beta)$-Lie algebra $L$, i.e., $\cU_c(L)$ for $c=c_{L,L}$ determined by $\beta$. It is straightforward to verify that
$\cU_{\beta_\sigma}(L_\sigma)$ is naturally isomorphic to $(\cU_\beta(L))_\sigma$. In particular, for $V$ in $\M^H$ and $c=c_{V,V}$ induced by $\beta$,
the $\sigma$-twist of the Nichols algebra $\cB(V,c)$ is naturally isomorphic to $\cB(V,c')$ where $c'$ is the braiding on $V$ induced by $\beta_\sigma$. This
gives an alternative proof of PBW Theorem for $(H,\beta)$-Lie algebras \cite{Ko}.

\begin{thm}\label{thm:symmetric}
Let $(H,\beta)$ be a cotriangular Hopf algebra that is either pseudo-involutive or finite-dimensional.
Let $V$ be a finite-dimensional $H$-comodule with the corresponding braiding $c$. If the Nichols algebra $\cB(V,c)$ is finite-dimensional
then it does not admit nontrivial graded deformations as an augmented algebra or bialgebra in $\M^H$.
\end{thm}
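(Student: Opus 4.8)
The plan is to convert the statement about graded deformations into a statement about brackets, and then reduce the whole problem to the purely super situation by a cocycle twist. First, recall from the end of this section that graded deformations of $\cB(V,c)$ as an augmented algebra or as a bialgebra in $\M^H$ (that is, with the braiding $c=c_{V,V}$ held fixed) are in bijection with $(H,\beta)$-Lie brackets $[\,,\,]\colon V\ot V\to V$. Hence it suffices to show that the only such bracket is the zero bracket; equivalently, that $\cU_\beta(V)=\cB(V,c)$ for every bracket.

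Second, I would invoke the Etingof--Gelaki cocycle $\sigma$ supplied by the hypothesis. Under either assumption (pseudo-involutive, via \cite{EG1}, or finite-dimensional, via \cite{EG2,Ge}) there is a right $2$-cocycle $\sigma$ on $H$ such that the twisted cotriangular Hopf algebra $(H_\sigma,\beta_\sigma)$ has comodule category equivalent to a category of super vector spaces (carrying an additional (super)group action), in which the braiding $c'$ induced by $\beta_\sigma$ is the signed flip associated to the underlying $\Z_2$-grading $V=V_0\oplus V_1$. By \cite{Ko}, the $\sigma$-twist sends the $(H,\beta)$-Lie algebra $(V,[\,,\,])$ to an $(H_\sigma,\beta_\sigma)$-Lie algebra, i.e. a Lie superalgebra $(V,[\,,\,]_\sigma)$, and it sends $\cB(V,c)$ to $\cB(V,c')$. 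Since the twist only alters the multiplication on the same underlying comodule, it preserves dimension in each graded degree; in particular $\dim\cB(V,c')=\dim\cB(V,c)<\infty$.

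Third, because $c'$ is the signed flip we have $\cB(V,c')\cong S(V_0)\ot\Lambda(V_1)$, which is finite-dimensional if and only if $V_0=0$, i.e. $V$ is purely odd. For a purely odd Lie superalgebra the bracket lands in the even part, so $[\,,\,]_\sigma\colon V_1\ot V_1\to V_0=0$ vanishes identically. Twisting back by $\sigma^{-1}$ (which again preserves the bracket under the category equivalence) forces $[\,,\,]=0$ on $V$. Therefore $\cB(V,c)$ admits no nontrivial graded deformations, as an augmented algebra or as a bialgebra in $\M^H$.

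The main obstacle is the second step: one must check that in the finite-dimensional (modified supergroup algebra) case the twisted braiding $c'$ is genuinely the signed flip, rather than some more elaborate braiding incorporating the supergroup action, so that the exterior-algebra computation of $\cB(V,c')$ applies verbatim. This relies on the fact that the representation category of a supergroup is a symmetric tensor category whose braiding is the parity sign, the group action being an additional, braiding-irrelevant, structure. One also needs the compatibility of the deformation--bracket correspondence with the equivalence $\M^H\simeq\M^{H_\sigma}$, which is immediate since that equivalence is the identity on objects and morphisms and only transports the tensor product.
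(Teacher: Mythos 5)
Your proof is correct and follows essentially the same route as the paper's: reduce graded deformations to $(H,\beta)$-Lie brackets via Kharchenko's PBW correspondence, apply the Etingof--Gelaki cocycle twist so that the braiding becomes the signed flip, observe that finite-dimensionality of $\cB(V,c')=S(V_0)\ot\Lambda(V_1)$ forces $V_0=0$, and conclude that a purely odd space admits no nontrivial Lie superalgebra bracket, hence no nontrivial $(H,\beta)$-Lie bracket after twisting back. The additional details you supply (dimension preservation under the twist, and the identification of the twisted braiding as the parity flip in the modified supergroup case) are points the paper leaves implicit, and they resolve exactly as you indicate.
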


\begin{proof}
By our assumption on $(H,\beta)$, there exists a cocycle $\sigma$ such that $(H_\sigma,\beta_\sigma)$ is as described by Etingof and Gelaki.
Then the braiding $c'$ induced by $\beta_\sigma$ on $V$ is just the signed flip associated to a $\Z_2$-grading $V=V_0\oplus V_1$,
so $\cB(V,c')=S(V_0)\ot\Lambda(V_1)$, which is finite-dimensional only if $V_0=0$.
But in this case $V$ does not admit nontrivial Lie superalgebra structures. It follows that $V$ does not admit nontrivial $(H,\beta)$-Lie algebra structures
and hence $\cB(V,c)$ is rigid in $\M^H$.
\end{proof}

\begin{cor}
Let $(V,c)$ be a finite-dimensional braided vector space such that $c$ can be obtained from a coaction by a finite-dimensional cotriangular Hopf algebra.
If $\cB(V,c)$ is finite-dimensional then it does not admit nontrivial graded deformations as a braided augmented algebra or bialgebra.
\end{cor}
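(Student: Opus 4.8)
The plan is to deduce the corollary from Theorem \ref{thm:symmetric} by showing that any braided deformation of $\cB(V,c)$, which a priori is only a stand-alone object, automatically takes place inside a symmetric comodule category to which the theorem applies.

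First I would invoke the hypothesis to fix a finite-dimensional cotriangular Hopf algebra $(H,\beta)$ with $V\in\M^H$ and $c=c_{V,V}$. Cotriangularity makes $\M^H$ symmetric, so $\cB=\cB(V,c)$ is a bialgebra in $\M^H$ whose braiding $c_{\cB,\cB}$ restricts to $c$ on $\cB_1=V$. Following Subsection \ref{ss:braided_bialg}, I would then replace $(H,\beta)$ by $(\bar H,\bar\beta)$, the quotient of $H$ by the largest biideal contained in the left and right kernels of $\beta$. Since that biideal is automatically a Hopf ideal and $H$ is finite-dimensional, $\bar H$ is again a finite-dimensional Hopf algebra, and $\bar\beta$ inherits the cotriangular identity $\bar\beta^{-1}(h,k)=\bar\beta(k,h)$; hence $(\bar H,\bar\beta)$ is a finite-dimensional cotriangular Hopf algebra and $\M^{\bar H}$ is symmetric. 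The point of this reduction is that $\M^{\bar H}$ contains $(V,c)$, the algebra $\cB$ with its braiding unchanged, and every linear map between tensor powers of $\cB$ that commutes with the braiding with $V$.

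Next I would take an arbitrary graded deformation $(m_t,\Delta_t)$ of $\cB$ as a stand-alone braided augmented algebra or bialgebra, with components $m_i,\Delta_i$ of degree $-i$. Because the braiding is fixed throughout such a deformation, comparing coefficients of $t^i$ shows that each $m_i$ and $\Delta_i$ commutes with the braiding induced by $c$ among the tensor powers of $\cB$. Restricting this compatibility to the subobject $V\subseteq\cB$, I obtain that every $m_i,\Delta_i$ commutes with the braiding with $V$, so by the universal property of $\bar H$ all these maps are morphisms in $\M^{\bar H}$. Consequently $(m_t,\Delta_t)$ is a graded deformation of $\cB$ inside the symmetric category $\M^{\bar H}$.

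Finally I would apply Theorem \ref{thm:symmetric} to the finite-dimensional cotriangular Hopf algebra $\bar H$: as $\cB(V,c)$ is finite-dimensional, it has no nontrivial graded deformations as an augmented algebra or bialgebra in $\M^{\bar H}$. Thus $(m_t,\Delta_t)$ is equivalent to the trivial deformation by a degree-zero isomorphism in $\M^{\bar H}$; being a morphism in $\M^{\bar H}$, this isomorphism commutes with the braiding and therefore realizes an equivalence of stand-alone braided deformations as well. The step I expect to be most delicate is the passage in the third paragraph: one must check, using the hexagon axioms and naturality of the braiding, that commuting with $c_{\cB,\cB}$ on all tensor powers yields exactly the two-sided conditions $(f\ot\id)c_{V,X}=c_{V,Y}(\id\ot f)$ and $(\id\ot f)c_{X,V}=c_{Y,V}(f\ot\id)$ needed to invoke the universal property of $\bar H$, and that this passage leaves the comodule structure on $\cB$, hence its braiding and grading, unchanged.
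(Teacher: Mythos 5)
Your overall strategy---realize the deformation inside a symmetric comodule category over a finite-dimensional cotriangular Hopf algebra and then quote Theorem \ref{thm:symmetric}---is the same as the paper's, but the pivotal step in your third paragraph has a genuine gap. You claim that after replacing the \emph{given} $(H,\beta)$ by $\bar H=H/I$, where $I$ is the largest biideal contained in the kernels of $\beta$, the category $\M^{\bar H}$ contains every linear map between tensor powers of $\cB$ that commutes with the braiding with $V$. That ``universal property'' is not a formal consequence of killing the kernels of $\beta$: it is a special feature of the FRT bialgebra (respectively, FRT Hopf algebra) of $(V,c)$, recalled in the preliminaries from \cite[Corollary 1.9]{Tk}, and it fails for a general cotriangular Hopf algebra coacting on $V$. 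Concretely, let $\Gamma=\Z_2^3$ with generators $a,b,e$ and let $\beta$ be the symmetric bicharacter on $\Gamma$ with $\beta(a,a)=\beta(b,b)=\beta(a,b)=-1$, $\beta(a,e)=1$, $\beta(b,e)=-1$, $\beta(e,e)=1$. The characters $\beta(g,-)$, $g\in\Gamma$, are pairwise distinct, so the kernels of $\beta$ on $H=\kk\Gamma$ are zero and $\bar H=H$. Take $V=\kk x\oplus\kk y$ with $x\in V_a$, $y\in V_b$; then $c$ is $-1$ times the flip, so $\cB(V,c)=\Lambda(V)$ is finite-dimensional and all hypotheses of the Corollary hold. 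Yet the swap $x\mapsto y$, $y\mapsto x$ (extended to a braided bialgebra automorphism of $\Lambda(V)$) commutes with the braiding with $V$ while it is not a morphism in $\M^{\bar H}=\M^{H}$, since it does not preserve the $\Gamma$-degree. So ``commutes with the braiding'' does not imply ``colinear over $\bar H$,'' and your deformation maps $m_i,\Delta_i$ have no reason to land in $\M^{\bar H}$. (The step you flagged as delicate---deriving the two-sided commutation conditions from commuting with the braiding of $\cB$---is actually fine; the problem is the universal property you invoke afterwards.)

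The paper's proof avoids this by routing through the FRT construction set up in the preliminaries. Since $V$ is finite-dimensional and $c$ is rigid (it comes from a coaction of a finite-dimensional Hopf algebra), Takeuchi's theory produces a CQT Hopf algebra $A$ (the FRT Hopf algebra of $(V,c)$ reduced modulo the largest biideal in the kernels of \emph{its own} form) such that $\M^{A}$ contains $(V,c)$ and all linear maps commuting with the braiding with $V$---this is exactly the statement you wanted, but for $A$ rather than for $\bar H$. One then checks that Theorem \ref{thm:symmetric} applies to $A$: the universal property of FRT gives a bialgebra map $\phi$ from the FRT Hopf algebra to $H$ under which the FRT form is the pullback of $\beta$; hence $\ker\phi$ lies in the kernels of that form, so $A$ is a quotient of the image of $\phi$, a Hopf subalgebra of $H$, and is therefore finite-dimensional; moreover its form, being induced by the cotriangular $\beta$, is again cotriangular. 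This is what the paper means by realizing the deformation in $\M^{\bar H}$ ``for some quotient $(\bar H,\bar\beta)$ of $(H,\beta)$.'' With $A$ in place of your $\bar H$, your remaining steps (applying Theorem \ref{thm:symmetric} and transporting the trivializing isomorphism back to the stand-alone setting) go through as written.
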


\begin{proof}
By assumption, $V$ can be regarded as an object in $\M^H$ for some finite-dimensional cotriangular Hopf algebra $(H,\beta)$ such that $c=c_{V,V}$.
Any graded deformation of $\cB(V,c)$ can be realized in $\M^{\bar{H}}$ for some quotient $(\bar{H},\bar{\beta})$ of the cotriangular Hopf algebra $(H,\beta)$,
so it must be trivial by the above theorem.
\end{proof}

\section[The vanishing of second cohomology]{The vanishing of second algebra cohomology for a class\\ of augmented algebras in a braided category}

Let $\V$ be a braided tensor category consisting of vector spaces and linear maps.
Let $(\cB,\ep)$ be an augmented algebra in $\V$ acting trivially (i.e., via $\ep$) on some $U$ in $\V$.

\smallskip
\noindent$\diamond$ A  map $f\colon \cB\otimes \cB\to U$ in $\V$ is an $\ep$-\emph{cocycle} if $f(1,a)=0=f(a,1)$ and $f(xy,z)=f(x,yz)$ for all $a\in \cB$ and all $x,y,z\in \cB^+$.  The space of all $\ep$-cocycles is denoted by ${\rm Z}^2_\ep(\cB, U)$.

\smallskip
\noindent$\diamond$ An $\ep$-cocycle is an $\ep$-\emph{coboundary} if there exists a map $t\colon \cB\to U$ such that $t(1)=0$ and $f(x,y)=t(xy)$ for all $x,y\in \cB^+$.  The space of all $\ep$-coboundaries is denoted by ${\rm B}^2_\ep(\cB,U)$.

\smallskip
\noindent$\diamond$ The quotient of $\ep$-cocycles by $\ep$-coboundaries is denoted by ${\rm H}^2_\ep(\cB,U)={\rm Z}^2_\ep(\cB,U)/{\rm B}^2_\ep(\cB,U)$.

\smallskip
In what follows $(\cB^+)^2$ denotes the range of the multiplication $\cB^+\otimes_\cB \cB^+\stackrel{\m}{\to} \cB^+$, i.e., $(\cB^+)^2=\mathrm{span}\{xy\;|\;x,y\in \cB^+\}$.

\begin{lemma}[cf. {\cite[Subsection 4.1]{MW}}] Let $\cB$ be an augmented algebra in $\V$ and let $M=\ker\left( \cB^+\otimes_\cB \cB^+\stackrel{\m}{\to} \cB\right)$.
If the map $\cB^+\otimes_\cB \cB^+\stackrel{\m}{\to} (\cB^+)^2$ splits in $\V$, then for every space $U\in\V$, we have
${\rm H}^2_\ep(\cB,U) = \Hom(M, U)$.
\end{lemma}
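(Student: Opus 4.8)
The plan is to first identify the space of $\ep$-cocycles with $\Hom_{\V}(\cB^+\ot_\cB\cB^+, U)$, then use the splitting hypothesis to separate out the coboundaries and be left with $\Hom_{\V}(M,U)$. Since the augmentation $\ep$ is a morphism in $\V$, we have $\cB=\kk 1\oplus\cB^+$ with both summands objects of $\V$, so an $\ep$-cocycle $f$, being zero whenever an argument equals $1$, is the same datum as its restriction $f|_{\cB^+\ot\cB^+}\colon\cB^+\ot\cB^+\to U$ in $\V$. The surviving cocycle identity $f(xy,z)=f(x,yz)$ says precisely that this restriction coequalizes the two maps $\m\ot\id,\ \id\ot\m\colon\cB^+\ot\cB^+\ot\cB^+\rightrightarrows\cB^+\ot\cB^+$. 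As $\cB=\kk 1\oplus\cB^+$ (so the relations coming from $1\in\cB$ are vacuous), $\cB^+\ot_\cB\cB^+$ is exactly this coequalizer; hence $f|_{\cB^+\ot\cB^+}$ factors uniquely through the quotient $q\colon\cB^+\ot\cB^+\to\cB^+\ot_\cB\cB^+$ as $\hat f\circ q$, and $f\mapsto\hat f$ is a linear isomorphism ${\rm Z}^2_\ep(\cB,U)\cong\Hom_{\V}(\cB^+\ot_\cB\cB^+, U)$.

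Next I would feed in the hypothesis. Writing $\bar\m\colon\cB^+\ot_\cB\cB^+\to(\cB^+)^2$ for the induced multiplication, a section $\nu$ of $\bar\m$ in $\V$ splits the short exact sequence $0\to M\to\cB^+\ot_\cB\cB^+\xrightarrow{\bar\m}(\cB^+)^2\to 0$, giving $\cB^+\ot_\cB\cB^+\cong M\oplus(\cB^+)^2$ in $\V$. Applying $\Hom_{\V}(-,U)$ yields ${\rm Z}^2_\ep(\cB,U)\cong\Hom_{\V}(M,U)\oplus\Hom_{\V}((\cB^+)^2, U)$, where the second summand is the image of $\bar\m^{*}$, i.e. the cocycles $\hat f=s\circ\bar\m$ with $s\in\Hom_{\V}((\cB^+)^2, U)$, namely those vanishing on $M$.

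The remaining and, I expect, decisive step is to show that ${\rm B}^2_\ep(\cB,U)$ is exactly this second summand. Unwinding the definition, $\hat f$ comes from a coboundary iff $f(x,y)=t(xy)$ on $\cB^+$ for some linear $t$; writing $s=t|_{(\cB^+)^2}$ and using that $q$ is epic, this is equivalent to $\hat f=s\circ\bar\m$ for some a priori merely linear $s\colon(\cB^+)^2\to U$. Here is where the section is essential: precomposing with $\nu$ gives $s=\hat f\circ\nu$, which is a morphism in $\V$ because $\hat f$ and $\nu$ are. Conversely, any $s\in\Hom_{\V}((\cB^+)^2, U)$ does arise from a coboundary, since extending $s$ by zero to a linear $t\colon\cB\to U$ produces $s\circ\bar\m$. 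Thus ${\rm B}^2_\ep(\cB,U)$ coincides with $\Hom_{\V}((\cB^+)^2, U)$, the complementary summand, and passing to the quotient gives ${\rm H}^2_\ep(\cB,U)=\Hom_{\V}(M,U)$. The one delicate point is precisely this matching: without the splitting of $\bar\m$ one would only know that $s$ is linear, and the coboundaries could fill a proper subspace of $\Hom_{\V}((\cB^+)^2, U)$; the section forces $s$ into $\V$, so the coboundaries exhaust the whole summand.
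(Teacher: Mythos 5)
Your proof is correct and is in substance the paper's own argument, just repackaged structurally: the paper's $\Psi$ is your unique factorization of cocycles through the coequalizer $\cB^+\ot_\cB\cB^+$, and the paper's $\Phi(f)=f(p-\varphi\m)$ is exactly ``project onto $M$ along the image of the section'' in your decomposition $\cB^+\ot_\cB\cB^+\cong M\oplus(\cB^+)^2$, so the two proofs use the same two ingredients. The one point where you go beyond the paper's terse verification is making explicit that the coboundary datum $t$ is a priori only linear and that precomposing with the section forces the induced map $s=\hat f\circ\nu$ to be a morphism in $\V$; the paper leaves this implicit in the claim that the range of $\Phi\Psi-\id$ consists of coboundaries.
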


\begin{proof}
Let $\varphi\colon (\cB^+)^2\to \cB^+\otimes_\cB \cB^+$ be a splitting of $\m$ and let $p\colon \cB^+\ot \cB^+\to \cB^+\ot_\cB \cB^+$ be the canonical projection.
We define a map $\Phi\colon \Hom(M, U)\to {\rm H}_\ep^2(\cB,U) $ as follows: if $f\colon M\to U$, then the cocycle $\Phi(f)\colon \cB^+\ot \cB^+\to U$ is
$\Phi(f) = f(p - \varphi\m)$. The inverse $\Psi$ of $\Phi$ is defined as follows: if $g\colon \cB^+\otimes \cB^+\to U$ is a cocycle,
then $\Psi(g)\colon M\to U$ is the unique map such that $\Psi(g)p=g$.  Now observe that maps $\Phi$ and $\Psi$ are well defined:
$\Phi(f)$ is always a cocycle and $\Psi(g)=0$ whenever $g$ is a coboundary.  Note also that $\Psi\Phi=\id$ and that
the range of $\Phi\Psi-\id$ consists of coboundaries.
\end{proof}

\begin{remark}\label{split} 
A splitting of $\cB^+\otimes_\cB \cB^+\stackrel{\m}{\to} (\cB^+)^2$ in $\V$ automatically exists (it is usually not unique) if $\cB^+\otimes_\cB \cB^+$ is a semisimple object in $\V$.  This happens whenever $\V$ is either the category of Yetter-Drinfeld modules over a semisimple and cosemisimple Hopf algebra or the category of comodules over a cosemisimple CQT bialgebra.  It also happens if $\V$ is the category of Yetter-Drinfeld modules over $\mathbb{k}\Gamma$, where $\Gamma$ is a possibly infinite abelian group, and $\mathcal{B}$ is a direct sum of its one-dimensional subobjects in $\V$ (e.g., a quotient of the tensor algebra $T(V)$, for some $V$ of finite dimension over $\mathbb{k}$).
\end{remark}

Let $V$ be a an object in $\V$, $T(V)$ its tensor algebra and $I$ an ideal generated
by homogeneous elements of degree at least two. Let $\cB=T(V)/I$ and let $\pi\colon T(V)\to \cB$ be the canonical projection.
We also abbreviate $T(V)^+=\bigoplus_{n\ge 1} V^{\ot n}$ and
$T(V)_{(2)}=\bigoplus_{n\ge 2} V^{\ot n}$.

\begin{lem} The following is a commutative diagram:
\[
\begin{CD}
                         @.  I\ot T(V)^+ +T(V)^+\ot I @>{\m}>> I\\
 @.                             @VVV                    @VVV \\
T(V)^+\ot T(V)\ot T(V)^+ @>{\id\ot \m-\m\ot\id}>> T(V)^+\ot T(V)^+  @>\m>> T(V)_{(2)} \\
@V{\pi\ot\pi\ot\pi}VV                             @V{\pi\ot\pi}VV                   @V{\widetilde{\pi\ot\pi}}VV \\
\cB^+\ot \cB\ot \cB^+          @>{\id\ot \m-\m\ot\id}>>   \cB^+\ot \cB^+      @>p>>  \cB^+\ot_\cB \cB^+\\
 @.                                @V{\m}VV                @V\widetilde{\m}VV\\
                         @.       (\cB^+)^2        @=     (\cB^+)^2
\end{CD}
\]
where the maps $\widetilde{\m}$ and $\widetilde{\pi\ot\pi}$ are the universal maps arising from fact (1) below. Moreover, we have the following facts:
\begin{enumerate}
\item The second and third rows of the diagram are cokernel diagrams.
\item The second column of the diagram is exact at $T(V)^+\ot T(V)^+$.
\item The composition $T(V)_{(2)}\stackrel{\widetilde{\pi\ot\pi}}{\to} \cB^+\ot_\cB \cB^+\stackrel{\widetilde{\m}}{\to} (\cB^+)^2$
is equal to the restriction of $\pi$ to $T(V)_{(2)}$.
\item The map $\widetilde{\pi\ot\pi}$ is surjective.
\item If $\varphi\colon T(V)_{(2)}\to T(V)^+\ot T(V)^+$ is any splitting of multiplication
(e.g., the composition $T(V)_{(2)}\stackrel{\sim}{\to} V\ot T(V)^+\to T(V)^+\ot T(V)^+$ is such a splitting), then $\widetilde{\pi\ot\pi}=p(\pi\ot\pi)\varphi$.
\end{enumerate}
\end{lem}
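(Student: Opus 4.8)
The plan is to read the whole figure as two cokernel diagrams — the second and third rows — glued along the vertical maps, to build the two ``tilde'' maps purely from the universal property of these cokernels, and then to extract facts (2)--(5) by elementary diagram chases. Throughout I would work with the underlying vector spaces: since $\V$ consists of vector spaces and $\kk$-linear maps and every arrow in the diagram is a morphism in $\V$, the cokernels and the relative tensor products $\cB^+\ot_\cB\cB^+$ and $T(V)^+\ot_{T(V)}T(V)^+$ are computed on underlying spaces and carry the induced $\V$-structure.

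For fact (1), the third row is a cokernel diagram by the very definition of the relative tensor product: $\cB^+\ot_\cB\cB^+$ is the coequalizer of the right action $\m\ot\id$ and left action $\id\ot\m$ on $\cB^+\ot\cB\ot\cB^+$, so $p$ is the cokernel of $\id\ot\m-\m\ot\id$. For the second row I would identify $T(V)_{(2)}$ with $T(V)^+\ot_{T(V)}T(V)^+$: because $T(V)$ is free, $T(V)^+\cong V\ot T(V)$ as a right module, whence $T(V)^+\ot_{T(V)}T(V)^+\cong V\ot T(V)^+\cong T(V)_{(2)}$, and this isomorphism is induced by multiplication, so $\m$ exhibits $T(V)_{(2)}$ as the cokernel of $\id\ot\m-\m\ot\id$. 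With both rows cokernels, the tilde maps are forced. The multiplication $\cB^+\ot\cB^+\to(\cB^+)^2$ kills the image of $\id\ot\m-\m\ot\id$ by associativity, so it factors through $p$, giving $\widetilde{\m}$ with $\widetilde{\m}\,p=\m$. And $p(\pi\ot\pi)$ kills the image of $\id\ot\m-\m\ot\id$ in the $T(V)$-row — this is exactly commutativity of the left square (where $\pi$ is an algebra map) together with $p$ being a cokernel — so it factors through the second-row cokernel $\m$, giving $\widetilde{\pi\ot\pi}$ with $\widetilde{\pi\ot\pi}\,\m=p(\pi\ot\pi)$.

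For fact (2) I would use that $I$ is generated in degree $\ge 2$, so $I\subseteq T(V)_{(2)}\subseteq T(V)^+$ and therefore $\ker(\pi|_{T(V)^+})=I$. Exactness at $T(V)^+\ot T(V)^+$ is then the standard vector-space identity $\ker(\pi\ot\pi)=\ker\pi\ot T(V)^+ + T(V)^+\ot\ker\pi = I\ot T(V)^+ + T(V)^+\ot I$, valid because $\pi\colon T(V)^+\to\cB^+$ is a surjection of vector spaces.

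Finally, facts (3)--(5) are formal consequences of the relations $\widetilde{\m}\,p=\m$, $\widetilde{\pi\ot\pi}\,\m=p(\pi\ot\pi)$ and the fact that $\pi$ is an algebra map. For (3), precompose with the surjection $\m\colon T(V)^+\ot T(V)^+\to T(V)_{(2)}$: one gets $\widetilde{\m}\,\widetilde{\pi\ot\pi}\,\m=\widetilde{\m}\,p(\pi\ot\pi)=\m(\pi\ot\pi)=\pi\,\m$, and since $\m$ is onto this yields $\widetilde{\m}\,\widetilde{\pi\ot\pi}=\pi|_{T(V)_{(2)}}$. For (4), $p(\pi\ot\pi)=\widetilde{\pi\ot\pi}\,\m$ is surjective, being a composite of the surjections $\pi\ot\pi$ and $p$, and this forces $\widetilde{\pi\ot\pi}$ to be surjective. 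For (5), any splitting satisfies $\m\varphi=\id$, whence $p(\pi\ot\pi)\varphi=\widetilde{\pi\ot\pi}\,\m\varphi=\widetilde{\pi\ot\pi}$. The only genuine point requiring care is the cokernel identification for the second row, namely the freeness computation $T(V)^+\ot_{T(V)}T(V)^+\cong T(V)_{(2)}$ and the attendant bookkeeping guaranteeing that these cokernels and relative tensor products are honestly computed in $\V$; everything else is a routine chase.
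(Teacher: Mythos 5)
Your proposal is correct and follows essentially the same route as the paper: both identify the second row as a cokernel via the freeness identification $T(V)^+\cong V\ot T(V)$ (so $T(V)^+\ot_{T(V)}T(V)^+\cong V\ot T(V)^+\cong T(V)_{(2)}$), define $\widetilde{\m}$ and $\widetilde{\pi\ot\pi}$ by the universal property of the two cokernel rows, and then obtain facts (2)--(5) by the same diagram chases (your kernel-of-a-tensor-product argument for (2) and the computation $p(\pi\ot\pi)\varphi=\widetilde{\pi\ot\pi}\,\m\varphi=\widetilde{\pi\ot\pi}$ for (5) simply spell out steps the paper dismisses as clear or as consequences of the universal property).
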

\begin{proof}
Clearly, each of the squares of the diagram commutes. We prove the remaining claims below:
\begin{enumerate}
\item The third row is a cokernel diagram by definition. The second row is a cokernel diagram due to the fact that $T(V)^+=V\ot T(V)$ as a right
$T(V)$-module (with the obvious action on the second tensor factor), hence $T(V)^+\ot_{T(V)} T(V)^+= V\ot T(V)^+$, and
$V\ot T(V)^+\stackrel{\m}{\to} T(V)_{(2)}$ is an isomorphism.
\item Clear.
\item As $\pi$ is an algebra map, we have $\m(\pi\ot\pi)\m= \pi \m$. Hence $\widetilde{\m}(\widetilde{\pi\ot\pi})\m = \pi \m$.
By the universal property of cokernels this means that
$\widetilde{\m}(\widetilde{\pi\ot\pi})=\pi$.
\item Follows from the fact that maps $p$ and $\pi\ot\pi$ are surjective.
\item Follows from the universal property of cokernels.
\end{enumerate}
\end{proof}

\begin{cor}
The following sequence is exact:
\begin{equation*}
0\to {T(V)^+}I+I T(V)^+\to I\stackrel{\widetilde{\pi\ot\pi}}{\longrightarrow} \cB^+\ot_\cB \cB^+ \stackrel{\widetilde{\m}}{\to} (\cB^+)^2\to 0
\end{equation*}
Therefore, $I/(T(V)^+ I+IT(V)^+)\simeq \ker\left( \cB^+\ot_\cB  \cB^+\to (\cB^+)^2\right) $.
\end{cor}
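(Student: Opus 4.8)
The plan is to extract the four-term exact sequence directly from the commutative diagram of the preceding Lemma by a diagram chase, taking facts (1)--(5) as the only inputs; note first that $I\subseteq T(V)_{(2)}$, so the asserted map $\widetilde{\pi\ot\pi}\colon I\to\cB^+\ot_\cB\cB^+$ is just the restriction of the diagram's $\widetilde{\pi\ot\pi}$. Exactness must be verified at all four nonzero positions, but two are immediate: the map $T(V)^+ I + I T(V)^+\hookrightarrow I$ is the inclusion of a subspace, hence injective, and $\widetilde{\m}\colon\cB^+\ot_\cB\cB^+\to(\cB^+)^2$ is surjective by construction, $(\cB^+)^2$ being its range. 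So the content lies in the two middle positions.

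For exactness at $\cB^+\ot_\cB\cB^+$ I would use facts (3) and (4). Since $\widetilde{\m}\,\widetilde{\pi\ot\pi}=\pi|_{T(V)_{(2)}}$ by (3), and $I=\ker\pi$, the composite $\widetilde{\m}\,\widetilde{\pi\ot\pi}$ vanishes on $I$, giving $\widetilde{\pi\ot\pi}(I)\subseteq\ker\widetilde{\m}$. Conversely, for $\xi\in\ker\widetilde{\m}$ I lift it via the surjection $\widetilde{\pi\ot\pi}\colon T(V)_{(2)}\to\cB^+\ot_\cB\cB^+$ of (4) to some $y\in T(V)_{(2)}$; then $\pi(y)=\widetilde{\m}\,\widetilde{\pi\ot\pi}(y)=\widetilde{\m}(\xi)=0$ forces $y\in I$, so $\xi=\widetilde{\pi\ot\pi}(y)\in\widetilde{\pi\ot\pi}(I)$. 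This closes the position with no computation.

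For exactness at $I$ I must identify $\ker\big(\widetilde{\pi\ot\pi}|_I\big)$ with $T(V)^+ I + I T(V)^+$, the image of $\m\colon I\ot T(V)^+ + T(V)^+\ot I\to I$. The inclusion $T(V)^+ I + I T(V)^+\subseteq\ker\widetilde{\pi\ot\pi}$ follows by chasing an element $w$ of $I\ot T(V)^+ + T(V)^+\ot I$ through the commuting square $\widetilde{\pi\ot\pi}\,\m=p\,(\pi\ot\pi)$: since this middle-column term is exactly $\ker(\pi\ot\pi)$ by (2), one gets $\widetilde{\pi\ot\pi}(\m(w))=p\,(\pi\ot\pi)(w)=0$. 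The reverse inclusion is the crux. Given $x\in I$ with $\widetilde{\pi\ot\pi}(x)=0$, I apply the splitting $\varphi$ of $\m$ and fact (5) to write $0=\widetilde{\pi\ot\pi}(x)=p\,(\pi\ot\pi)\varphi(x)$, so $(\pi\ot\pi)\varphi(x)\in\ker p=\operatorname{im}(\id\ot\m-\m\ot\id)$ because row 3 is a cokernel diagram. I then lift a preimage back along the surjection $\pi\ot\pi\ot\pi$ to some $\tilde z$, and use commutativity of the lower-left square to see that $\varphi(x)-(\id\ot\m-\m\ot\id)(\tilde z)\in\ker(\pi\ot\pi)=I\ot T(V)^+ + T(V)^+\ot I$ by (2) again. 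Applying $\m$ and invoking $\m(\id\ot\m-\m\ot\id)=0$ (associativity, i.e.\ row 2 is a complex) recovers $x=\m\big(\varphi(x)-(\id\ot\m-\m\ot\id)(\tilde z)\big)$, which therefore lies in $T(V)^+ I + I T(V)^+$.

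I expect this last lift-and-subtract step to be the main obstacle: it is the only place that genuinely uses surjectivity of $\pi\ot\pi\ot\pi$ together with the cokernel description of row 3, and it requires care in identifying the two a priori different multiplications $\m$ (on $I\ot T(V)^+ + T(V)^+\ot I\to I$ and on $T(V)^+\ot T(V)^+\to T(V)_{(2)}$). Once exactness is established, the final isomorphism $I/(T(V)^+ I+I T(V)^+)\simeq\ker\big(\cB^+\ot_\cB\cB^+\to(\cB^+)^2\big)$ is immediate from the first isomorphism theorem applied to $\widetilde{\pi\ot\pi}|_I$, whose kernel and image have just been computed.
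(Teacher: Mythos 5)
Your proposal is correct and follows essentially the same route as the paper's proof: the same use of facts (3) and (4) for exactness at $\cB^+\ot_\cB\cB^+$, and the same lift-and-subtract chase (lift $x$ to $T(V)^+\ot T(V)^+$, correct by an element of the image of $\id\ot\m-\m\ot\id$ pulled back along the surjection $\pi\ot\pi\ot\pi$, then land in $\ker(\pi\ot\pi)=I\ot T(V)^+ + T(V)^+\ot I$ and apply $\m$) for exactness at $I$. The only cosmetic difference is that you take the canonical lift $\varphi(x)$ via fact (5), where the paper picks an arbitrary preimage $y$ with $\m(y)=x$; these are interchangeable.
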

\begin{proof}
To avoid ambiguity, we denote the restriction of $\widetilde{\pi\ot\pi}$ to $I$ by $\tau$.
We first prove that $\ker(\tau)= {T(V)^+}I+I T(V)^+$.  The inclusion
${T(V)^+}I+I T(V)^+\subseteq \ker(\widetilde{\pi\ot\pi})$ follows from
$\widetilde{\pi\ot\pi}({T(V)^+}I+I T(V)^+)=(\widetilde{\pi\ot\pi})\m({T(V)^+}\ot I+I\ot T(V)^+)=
p(\pi\ot\pi)({T(V)^+}\ot I+I\ot T(V)^+)=0.$

Let $x\in \ker(\tau)$.
Since $\m(T(V)^+\ot T(V)^+)=T(V)_{(2)}$, there exists $y\in T(V)^+\ot T(V)^+$ such that $\m(y)=x$.
Now $0=(\widetilde{\pi\ot\pi})\m(y)=p(\pi\ot\pi)(y)$, and hence $(\pi\ot\pi)y=(\id\ot \m-\m\ot\id)z$ for some $z\in \cB^+\ot \cB\ot \cB^+$.  Let $w\in T(V)^+\ot T(V)\ot T(V)^+$ be such that $(\pi\ot\pi\ot\pi)(w)=z$.  Define $y'=y-(\id\ot \m-\m\ot\id)w$. As $(\pi\ot\pi)y'=0$ we have that $y'\in I\ot {T(V)^+} + {T(V)^+}\ot I$ and hence $x=\m(y)=\m(y')\in I T(V)^+ + T(V)^+ I$.

We now prove that $\widetilde{\pi\ot\pi}(I)=\ker(\cB^+\ot_\cB \cB^+\stackrel{\widetilde{\m}}{\to} (\cB^+)^2)$.  The inclusion $\subseteq$ follows from part (3) of the lemma above: $\widetilde{\m}\widetilde{(\pi\ot\pi)}(I)=\pi(I)=0$.  The inclusion $\supseteq$ follows from the fact that $\widetilde{\pi\ot\pi}$ is surjective.
\end{proof}

\begin{cor}
If $I$ is generated by a subobject $R$, then the induced morphism
\[
R\to \ker\left( \cB^+\ot_\cB \cB^+\to (\cB^+)^2\right)
\]
is surjective.\qed
\end{cor}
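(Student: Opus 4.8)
The plan is to deduce the corollary directly from the exact sequence established in the previous corollary. That result identifies $\ker\left(\cB^+\ot_\cB\cB^+\to(\cB^+)^2\right)$ with the image $\tau(I)$, where $\tau=\widetilde{\pi\ot\pi}|_I$, and shows that $\ker\tau=T(V)^+I+IT(V)^+$. The morphism whose surjectivity we must prove is precisely $\tau|_R$. Hence it suffices to show that $R$ surjects onto the quotient $I/(T(V)^+I+IT(V)^+)$, equivalently that
\[
I=R+T(V)^+I+IT(V)^+.
\]

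To establish this I would use that $I$ is the two-sided ideal generated by the subobject $R$, so $I=T(V)\,R\,T(V)$. Splitting $T(V)=\kk\oplus T(V)^+$ as a direct sum of subobjects in $\V$ and expanding the product gives
\[
I=R+T(V)^+R+R\,T(V)^++T(V)^+R\,T(V)^+.
\]
Each summand on the right other than $R$ carries a factor from $T(V)^+$ adjacent to an element of $R\subseteq I$, and therefore lies in $T(V)^+I+IT(V)^+$. This yields $I=R+T(V)^+I+IT(V)^+$, and applying $\tau$ gives $\tau(R)=\tau(I)=\ker\left(\cB^+\ot_\cB\cB^+\to(\cB^+)^2\right)$, as required.

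I do not expect any genuine obstacle here, since the argument is purely formal once the preceding corollary is available. The only point needing a little attention is that everything must be interpreted inside the braided category $\V$: the decomposition $T(V)=\kk\oplus T(V)^+$ is one into subobjects, $R$ is a subobject by hypothesis, and $\tau$ is a morphism in $\V$, so $\tau|_R$ is indeed a morphism in $\V$ and not merely a linear surjection.
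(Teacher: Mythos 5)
Your proof is correct and takes the same route the paper intends: the paper states this corollary with no proof at all (just a QED mark), treating it as an immediate consequence of the preceding exact sequence $0\to T(V)^+I+IT(V)^+\to I\to \cB^+\ot_\cB\cB^+\to(\cB^+)^2\to 0$. Your decomposition $I=R+T(V)^+I+IT(V)^+$, obtained from $I=T(V)\,R\,T(V)$ and $T(V)=\kk\oplus T(V)^+$, followed by applying $\widetilde{\pi\ot\pi}$, is exactly the omitted verification.
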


We summarize the above results in a theorem which will be needed in the next section to establish rigidity of certain graded bialgebras in $\V$.

\begin{thm}\label{thm-one}
Let $V$ be a an object in $\V$ and $T(V)$ its tensor algebra. Let $R\subset T(V)_{(2)}$ be a graded subspace that is an object in $\V$.
Consider the augmented algebra $\cB=T(V)/\langle R\rangle$ and an object $U$ in $\V$ on which $\cB$ acts trivially (i.e., via $\varepsilon$).
If the multiplication map $\cB^+\otimes_\cB \cB^+\stackrel{\m}{\to} (\cB^+)^2$ splits in $\V$, then there is an injection ${\rm H}_\ep^2(\cB, U)\to \Hom(R, U)$.

In particular, if $f$ is an $\varepsilon$-cocycle such that for every $u\in \cB\otimes \cB$ in the range of the composition
$R\to V\otimes T(V)^+\to \cB\otimes \cB$ we have $f(u)=0$, then $f$ is an $\varepsilon$-coboundary.\qed
\end{thm}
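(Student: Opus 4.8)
The plan is to combine the three preceding results---the Lemma computing ${\rm H}_\ep^2(\cB,U)$, the diagram Lemma, and its final Corollary---into a single injection, and then to make that injection explicit. Write $M=\ker\left(\cB^+\ot_\cB\cB^+\stackrel{\m}{\to}(\cB^+)^2\right)$. Since the multiplication $\cB^+\ot_\cB\cB^+\to(\cB^+)^2$ splits in $\V$ by hypothesis, the Lemma yields a natural isomorphism ${\rm H}_\ep^2(\cB,U)\cong\Hom(M,U)$. On the other hand $\cB=T(V)/\langle R\rangle$, so the defining ideal is generated by the subobject $R$, and the last Corollary provides a \emph{surjection} $R\to M$ in $\V$, namely the restriction of $\widetilde{\pi\ot\pi}$ to $R$. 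Applying the contravariant functor $\Hom(-,U)$ to this epimorphism gives an injection $\Hom(M,U)\to\Hom(R,U)$; composing it with the isomorphism above produces the desired injection ${\rm H}_\ep^2(\cB,U)\to\Hom(R,U)$, establishing the first assertion.

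To obtain the ``in particular'' clause I would next identify this injection concretely. Unwinding the proof of the Lemma, a cocycle $f$ factors through the canonical projection $p\colon\cB^+\ot\cB^+\to\cB^+\ot_\cB\cB^+$ as $f=\bar f\,p$ (this is exactly the associativity condition $f(xy,z)=f(x,yz)$), and the isomorphism $\Psi$ sends the class of $f$ to $\bar f|_M$. Combining this with fact (5) of the diagram Lemma, which computes $\widetilde{\pi\ot\pi}=p(\pi\ot\pi)\varphi$ where $\varphi\colon T(V)_{(2)}\stackrel{\sim}{\to}V\ot T(V)^+\to T(V)^+\ot T(V)^+$ is the ``first letter'' splitting, the value of the injection on the class of $f$ at an element $r\in R$ is
\[
\bar f|_M\big(\widetilde{\pi\ot\pi}(r)\big)=\bar f\,p\big((\pi\ot\pi)\varphi(r)\big)=f\big((\pi\ot\pi)\varphi(r)\big).
\]
Here $(\pi\ot\pi)\varphi(r)$ is precisely the image of $r$ under the composition $R\to V\ot T(V)^+\to\cB\ot\cB$ named in the statement.

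With this description in hand the second assertion is immediate: if $f$ is an $\ep$-cocycle that vanishes on the range of $R\to V\ot T(V)^+\to\cB\ot\cB$, then the image of its class under the injection is the zero element of $\Hom(R,U)$, so the class itself is zero; that is, $f$ is an $\ep$-coboundary. I expect the only real difficulty to be bookkeeping: matching the abstract surjection $R\to M$ and the isomorphism $\Psi$ with the concrete map $R\to\cB\ot\cB$ appearing in the statement. This matching relies solely on the formula $\widetilde{\pi\ot\pi}=p(\pi\ot\pi)\varphi$ from fact (5) and on the surjectivity of $R\to M$ from the final Corollary, so no genuinely new computation is needed beyond carefully tracking these maps through the diagram.
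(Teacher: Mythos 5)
Your proof is correct and follows essentially the same route the paper intends: Theorem \ref{thm-one} is stated there as a summary (with no separate argument) of the preceding Lemma giving ${\rm H}^2_\ep(\cB,U)\cong\Hom(M,U)$, the diagram Lemma's fact (5), and the final Corollary's surjection $R\to M$, which is exactly how you assemble the injection and then unwind it via $\widetilde{\pi\ot\pi}=p(\pi\ot\pi)\varphi$ to get the ``in particular'' clause. Your explicit identification of the injection as $[f]\mapsto\bigl(r\mapsto f((\pi\ot\pi)\varphi(r))\bigr)$ is precisely the bookkeeping the paper leaves implicit.
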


\section[A sufficient condition for rigidity]{A sufficient condition for rigidity of graded bialgebras\\ in a braided category}

Let $\cB$ be a graded bialgebra in $\V$.  For a homogeneous map $f\colon \cB\ot \cB\to \cB$ of degree $\ell$ and a nonnegative integer $r$ we define $f_r\colon \cB\ot \cB\to \cB$ by $f_r|_{(\cB\ot \cB)_r}=f$ and $f_r|_{(\cB\ot \cB)_s}=0$ for $s\not=r$.  For $g\colon \cB\to \cB\ot \cB$, we define $g_r$ analogously.  We also define $f_{\le r}$ by $f_{\le r}=\sum_{i=0}^r f_i$, and $f_{<r}, g_{\le r}, g_{<r}$ in a similar fashion.

\begin{lemma}[cf. {\cite[Lemma 2.3.6]{MW}}]\label{lem-one}  Let $\cB$ be a graded bialgebra in $\V$ such that $\cB_0=\kk$
and $\cB$ is generated as an algebra by $\cB_1$.
\begin{enumerate}
\item
If $(f,g)\in {\rm Z}^2_{\mathrm{b}}(\cB)_\ell$, $r>1$, $f_{\le r}=0$, and $g_{<r}=0$, then $g_r=0$.
\item
If $(f,g)\in {\rm Z}^2_{\mathrm{b}}(\cB)_\ell$, $\ell<0$, and $f_{\le r}=0$, then $g_{\le r}=0$.
\item
If $(0,g)\in {\rm Z}^2_{\mathrm{b}}(\cB)_\ell$, $\ell<0$, then $g=0$.
\end{enumerate}
\end{lemma}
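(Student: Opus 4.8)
The plan is to derive all three statements from the compatibility cocycle condition \eqref{eq-compat} alone, evaluated on homogeneous arguments; the identities \eqref{eq-assoc} and \eqref{eq-coassoc} will play no role. For part (1), I would fix homogeneous $a\in\cB_i$ and $b\in\cB_j$ with $i,j\ge1$ and $i+j=r$, and evaluate \eqref{eq-compat} on $a\ot b$. The first task is to show that the entire left-hand side vanishes. The braided coproduct $\De(a\ot b)=(\id\ot c_{\cB,\cB}\ot\id)(\De a\ot\De b)$ is a sum of terms whose four tensor legs are degree-preserving images of the Sweedler components of $a$ and $b$, so every leg has degree at most $i$ or $j$. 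Hence each of the three occurrences of $f$ on the left-hand side --- inside $(f\ot m)\De(a\ot b)$, inside $\De f(a,b)$, and inside $(m\ot f)\De(a\ot b)$ --- is applied to an argument of total degree at most $i+j=r$ and is therefore annihilated by the hypothesis $f_{\le r}=0$. This collapses \eqref{eq-compat} to the twisted Leibniz identity $g(ab)=(\De a)\,g(b)+g(a)\,(\De b)$. Since $g(a)=g_i(a)$ and $g(b)=g_j(b)$ with $i,j<r$, the assumption $g_{<r}=0$ kills both terms on the right, leaving $g_r(ab)=g(ab)=0$. Because $r>1$ and $\cB$ is generated by $\cB_1$, we have $\cB_r=\cB_1\cdot\cB_{r-1}$, so products $ab$ with $a\in\cB_1$ and $b\in\cB_{r-1}$ span $\cB_r$; therefore $g_r=0$.

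Part (2) is then an induction on $t$, for $0\le t\le r$, of the statement $g_t=0$. The cases $t=0,1$ are purely dimensional: $g_0=0$ because $\cB^+$ is concentrated in positive degrees, and $g_1=0$ because $g_1$ takes values in $(\cB^+\ot\cB^+)_{1+\ell}$, which vanishes once $\ell<0$ since $\cB^+\ot\cB^+$ lives in degrees $\ge2$. For $2\le t\le r$, the inductive hypothesis gives $g_{<t}=0$ and the hypothesis $f_{\le r}=0$ gives $f_{\le t}=0$, so part (1), applied with $t$ in place of $r$ (valid as $t>1$), yields $g_t=0$. Part (3) is then the special case $f=0$ of part (2): here $f_{\le r}=0$ holds for every $r$, so $g_{\le r}=0$ for every $r$, i.e. $g=0$.

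The only step demanding genuine care --- and hence the main obstacle --- is the degree bookkeeping that makes the left-hand side of \eqref{eq-compat} vanish. I would have to follow the four Sweedler legs through the braided coproduct $\De(a\ot b)$, using that $c_{\cB,\cB}$ preserves the grading, and confirm that in all three $f$-terms the argument fed into $f$ has total degree at most $r$; the boundary terms in which a leg lands in $\cB_0$ are removed by the normalization conventions $f(1,-)=f(-,1)=0$ of the complex $\B^+$. Once this vanishing is established, the remainder is a clean induction whose only substantive input is that $\cB$ is generated in degree one.
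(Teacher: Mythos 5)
Your proof is correct and follows essentially the same route as the paper's: both evaluate the compatibility identity \eqref{eq-compat} on homogeneous products, use $f_{\le r}=0$ to kill every $f$-term (the paper takes $a\in\cB_1$, $b\in\cB_{r-1}$ rather than general bidegrees), use $g_{<r}=0$ to kill the boundary terms, and conclude via generation in degree one, with (2) and (3) following by the same induction from the low-degree vanishing $(\cB^+\ot\cB^+)_0=(\cB^+\ot\cB^+)_1=0$.
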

\begin{proof}
The proof in \cite{MW} carries over word for word.  First note that  for every $(f,g)\in {\rm Z}^2_{\mathrm{b}}(\cB)$ we have $f_{\le 1}=0$ and $g_{\le 2}=0$, due to the fact that $(\cB^+\ot \cB^+)_0=0=(\cB^+\ot \cB^+)_1$.  Hence (1) easily yields (2) and (3).

For (1) recall that $\na^c f = -\na^h g$ by Equation \eqref{eq-compat}.  If $r>1$, $a\in \cB_1$ and $b\in \cB_{r-1}$, then
$$ (\na^c f)(a,b)=0=-(\na ^h g)(a,b) = -(\De a)g(b) + g(ab) - g(a)(\De b) = g(ab).$$
As $\cB_r$ is spanned by such products $ab$, we have that $g(\cB_r)=0$.
\end{proof}

\begin{lemma}[cf. {\cite[Lemma 2.3.5]{MW}}]  Let $\cB$ be a connected graded bialgebra in $\V$,
let $r\in \N$, and let $f\colon \cB\otimes \cB\to \cB$ be a homogeneous unital Hochschild cocycle in $\V$ (with respect to left and right regular actions of $\cB$ on itself).  If $f_{<r}=0$, then $f_r\colon \cB\ot \cB\to \cB$ is an $\ep$-cocycle.
\end{lemma}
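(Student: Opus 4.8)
The plan is to verify directly that $f_r$ satisfies the two defining conditions of an $\ep$-cocycle. First note that $f_r$ is again a morphism in $\V$: it is the composite of $f$ with the projection of $\cB\ot\cB$ onto its degree-$r$ component, which is a morphism because the grading is a grading in $\V$. Since the objects of $\V$ are genuine vector spaces and all maps involved are linear, it suffices to test the cocycle conditions on homogeneous elements $x,y,z\in\cB^+$, and the whole argument rests on combining the grading with the hypothesis $f_{<r}=0$.

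The normalization condition is immediate: because $f$ is a unital Hochschild cochain we have $f(1,a)=0=f(a,1)$, and $f_r$ is a restriction of $f$ to a graded component of $\cB\ot\cB$, so $f_r(1,a)$ and $f_r(a,1)$ are each either $0$ or equal to the corresponding value of $f$, hence $0$. For the main condition $f_r(xy,z)=f_r(x,yz)$, I would write out the Hochschild cocycle identity $\na^h f=0$ using the left and right regular actions (so that $\lambda_l=\lambda_r=m$), namely
\[
x\,f(y,z)-f(xy,z)+f(x,yz)-f(x,y)\,z=0,
\]
and evaluate it on homogeneous $x,y,z\in\cB^+$ whose degrees sum to $r$. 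Since $x$ has positive degree, $y\ot z$ lies in degree strictly below $r$, so $f(y,z)=0$ by $f_{<r}=0$; symmetrically $f(x,y)=0$. The identity therefore collapses to $f(xy,z)=f(x,yz)$, which is precisely $f_r(xy,z)=f_r(x,yz)$ since both $xy\ot z$ and $x\ot yz$ have total degree $r$.

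It remains to treat homogeneous $x,y,z\in\cB^+$ whose degrees do not sum to $r$; then $xy\ot z$ and $x\ot yz$ both lie in a degree $\neq r$, so $f_r$ annihilates them and the identity $f_r(xy,z)=f_r(x,yz)=0$ holds trivially. Extending by linearity over the homogeneous components gives the condition for all $x,y,z\in\cB^+$, completing the proof. There is no genuine obstacle beyond careful bookkeeping of degrees; the one point to get right is that in each of the two action terms of the Hochschild differential a single positive-degree factor is absorbed by the (regular) module action, so that $f$ is evaluated in total degree $<r$ and hence vanishes. This is exactly the mechanism that turns the regular-action Hochschild cocycle condition, in the leading degree $r$, into the trivial-action ($\ep$-)cocycle condition.
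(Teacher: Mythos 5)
Your proof is correct and is essentially the paper's own argument: the paper simply states that the lemma ``follows directly from $\na^h f=0$, see Equation \eqref{eq-assoc}'', and your degree bookkeeping (using connectedness so that elements of $\cB^+$ have positive degree, hence the two regular-action terms of the Hochschild identity are evaluated in degree $<r$ and vanish) is precisely the detail being left to the reader. No discrepancy to report.
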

\begin{proof}
This follows directly from $\na^h f =0$, see Equation \eqref{eq-assoc}.
\end{proof}

\begin{thm}[cf. {\cite[Lemma 4.2.2]{MW}}]\label{thm:suff_cond}
Let $V$ be a an object in $\V$ and $T(V)$ its (braided) tensor bialgebra. Let $R\subset T(V)_{(2)}$ be a graded subspace that is an object in $\V$ and generates a biideal in $T(V)$.
Consider the quotient $\cB=T(V)/\langle R\rangle$, which is a graded bialgebra in $\mathcal{V}$, and assume that the multiplication map
$\m\colon \cB^+\otimes_\cB \cB^+\to (\cB^+)^2$ splits in $\mathcal{V}$.  If for some negative $\ell$ we have that $\Hom(R, P(\cB))_\ell = 0$, then $\widehat{\coh}^2_{\mathrm{b}}(\cB)_\ell=0$.

In particular, if $\Hom(R,P(\cB))_\ell=0$ for all negative $\ell$, then $\cB$ is rigid.
\end{thm}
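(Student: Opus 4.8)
Let $V$ be an object in $\V$ and $T(V)$ its (braided) tensor bialgebra. Let $R\subset T(V)_{(2)}$ be a graded subspace that is an object in $\V$ and generates a biideal in $T(V)$. Consider the quotient $\cB=T(V)/\langle R\rangle$, which is a graded bialgebra in $\V$, and assume that the multiplication map $\m\colon \cB^+\otimes_\cB \cB^+\to (\cB^+)^2$ splits in $\V$. If for some negative $\ell$ we have that $\Hom(R,P(\cB))_\ell=0$, then $\widehat{\coh}^2_{\mathrm b}(\cB)_\ell=0$.

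In particular, if $\Hom(R,P(\cB))_\ell=0$ for all negative $\ell$, then $\cB$ is rigid.

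---The plan is to deduce the ``in particular'' assertion from the main claim together with the criterion recalled earlier in the discussion of graded deformations (namely, that $\cB$ is rigid as soon as $\widehat{\coh}_{\mathrm{b}}^2(\cB)_\ell=0$ for all $\ell<0$). Thus it suffices to fix a negative $\ell$ with $\Hom(R,P(\cB))_\ell=0$ and to show that every $2$-cocycle $(f,g)$ representing a class in $\widehat{\coh}_{\mathrm{b}}^2(\cB)_\ell$ is a coboundary. Since $\cB=T(V)/\langle R\rangle$ is connected graded with $\cB_0=\kk$ and is generated by $\cB_1=V$, Lemma \ref{lem-one} applies. The strategy is to kill $f$ by subtracting bialgebra coboundaries one source-degree at a time, and then to dispose of the remaining $g$ using Lemma \ref{lem-one}(3).

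First I would set up the induction in the source degree, writing $f_r,f_{<r}$ for the homogeneous components as defined above. Since $f_{\le 1}=0$ automatically, the base case holds. Assume $f_{<r}=0$ for some $r>1$; then Lemma \ref{lem-one}(2) yields $g_{<r}=0$. Because the associativity identity \eqref{eq-assoc} says precisely that $f$ is a unital Hochschild cocycle, the preceding lemma on Hochschild cocycles shows that the bottom component $f_r$ is an $\ep$-cocycle. The crucial point is then that $f_r$ takes values in the primitives $P(\cB)$. I would extract this from the compatibility identity \eqref{eq-compat} evaluated on $a\ot b$ with $a,b\in\cB^+$ and $\deg a+\deg b=r$: since $g_{<r}=0$, the terms $(\De a)g(b)$ and $g(a)(\De b)$ vanish, and connectedness forces the surviving left-hand contributions $(f\ot m)\De(a\ot b)+(m\ot f)\De(a\ot b)$ to reduce, in lowest coalgebra degree, to $f_r(a,b)\ot 1+1\ot f_r(a,b)$; comparing with $\De f_r(a,b)$ then shows that $f_r(a,b)$ is primitive.

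Granting this, $f_r$ is an $\ep$-cocycle of degree $\ell$ valued in the trivial module $U=P(\cB)$. Theorem \ref{thm-one} supplies an injection ${\rm H}_\ep^2(\cB,P(\cB))\to\Hom(R,P(\cB))$, which respects the internal grading, and by hypothesis $\Hom(R,P(\cB))_\ell=0$; hence ${\rm H}_\ep^2(\cB,P(\cB))_\ell=0$ and $f_r$ is an $\ep$-coboundary, say $f_r(x,y)=t(xy)$ for some degree-$\ell$ map $t\colon\cB\to P(\cB)$ with $t(1)=0$. I would then let $h\colon\cB^+\to\cB^+$ be $t$ on $\cB_r$ and $0$ elsewhere and subtract from $(f,g)$ the bialgebra coboundary determined by $h$. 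A direct computation shows that, for $a,b\in\cB^+$ with $\deg a+\deg b=r$, one has $h(a)=h(b)=0$, so this leaves $f_{<r}$ untouched while cancelling $f_r$; since $h$ is primitive-valued, the reduced part of $\De h$ vanishes and the accompanying change in $g$ stays under control. Iterating over $r$ reduces $f$ to $0$, and the resulting cocycle $(0,g')$ with $\ell<0$ satisfies $g'=0$ by Lemma \ref{lem-one}(3). Therefore $(f,g)$ is a coboundary and $\widehat{\coh}_{\mathrm{b}}^2(\cB)_\ell=0$.

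The main obstacle is the primitivity claim of the second paragraph: verifying carefully, in the braided setting and with the braiding entering the expansion of $\De(a\ot b)$, that the lowest source-degree part $f_r$ lands in $P(\cB)$ rather than merely in $\cB_{r+\ell}$. This is exactly where the interplay of \eqref{eq-compat}, the vanishing $g_{<r}=0$, and connectedness is essential. A secondary point requiring care is the termination of the inductive killing of $f$: the total map $h$ is assembled degree by degree but acts as a finite sum on each homogeneous component, so the reduction is well defined and goes through even when $\cB$ is infinite-dimensional.
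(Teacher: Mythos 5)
Your overall architecture matches the paper's proof: induct on the source degree $r$, use Lemma \ref{lem-one} to deduce $g_{<r}=0$ from $f_{<r}=0$, show that the lowest nonzero component $f_r$ is an $\ep$-coboundary $t\m$ via Theorem \ref{thm-one}, subtract the corresponding bialgebra coboundary, and finish off the leftover $(0,g)$ with Lemma \ref{lem-one}(3). But there is a genuine gap at what you yourself flag as the crucial point: the claim that $f_r$ takes \emph{all} of its values in $P(\cB)$. When you evaluate \eqref{eq-compat} on $a\ot b$ with $a,b\in\cB^+$ and $\deg a+\deg b=r$, the terms $(\De a)g(b)$ and $g(a)(\De b)$ do vanish because $g_{<r}=0$, but you have dropped the middle term $g(ab)$ on the right-hand side: $ab$ has degree exactly $r$, so this term is $g_r(ab)$, about which nothing is known at this stage. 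The identity you actually obtain says that the failure of $f_r(a,b)$ to be primitive is measured exactly by $g_r(ab)$; since $\cB$ is generated in degree one, $f_r$ is primitive-valued if and only if $g_r=0$, and Lemma \ref{lem-one}(1) only forces $g_r=0$ when $f_{\le r}=0$, which is precisely not the case here. Concretely, take $h\colon\cB\to\cB$ homogeneous of degree $\ell$ in $\V$, supported on $\cB_r$, with values not in $P(\cB)$; its coboundary $(f,g)=\na^b h$ satisfies all cocycle identities together with $f_{<r}=0$ and $g_{<r}=0$, yet $f_r(a,b)=\pm h(ab)$ is not primitive. So your intermediate claim cannot follow from the identities you invoke, and with it falls the application of Theorem \ref{thm-one} with coefficients $U=P(\cB)$, as well as the later step where you need $t$ primitive-valued to keep ``the accompanying change in $g$ under control.''

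The paper threads this needle differently, and this is exactly the idea your argument is missing: primitivity of $f_r(u)$ is asserted only for $u$ in the range of the composition $R\to V\ot T(V)^+\to\cB\ot\cB$, because for such $u$ one has $\m(u)=0$, so the offending term $g(\m(u))$ vanishes and \eqref{eq-compat} genuinely yields $f_r(u)\in P(\cB)$. The hypothesis $\Hom(R,P(\cB))_\ell=0$ is then used to conclude that the composition $R\to\cB\ot\cB\xrightarrow{f_r}\cB$ is the \emph{zero map} (not that a cohomology group with coefficients in $P(\cB)$ vanishes), and one invokes the second, ``in particular,'' clause of Theorem \ref{thm-one}: an $\ep$-cocycle vanishing on the image of $R$ is an $\ep$-coboundary $t\m$, where $t$ is valued in $\cB$ and in general not in $P(\cB)$. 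Setting $s_r=t_r$ gives $f_r=\na^h s_r$, and the matching identity $g_r=-\na^c s_r$ follows automatically from Lemma \ref{lem-one} applied to the cocycle $(f,g)-\na^b s_{\le r}$, with no primitivity of $t$ needed anywhere. Your induction scheme and bookkeeping are fine; it is this restriction to the image of $R$, together with the corresponding clause of Theorem \ref{thm-one}, that must replace your primitivity claim for the proof to go through.
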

\begin{proof}
Let $(f,g)\in {\rm Z}^2_{\mathrm{b}}(\cB)_\ell$.  We will find a map $s=\sum_{r=0}^\infty s_r\colon \cB\to \cB$  such that for every nonnegative $r$, $(f,g)_r=(\na^h s_r, -\na^c s_r)$, from where the result trivially follows since $(f,g)=\na^b \sum_{r=0}^\infty s_r$.
Here the sum $s=\sum_{r=0}^\infty s_r$ is potentially infinite but locally finite. The cases $r=0,1$ are clear. Suppose that $s_0,\ldots s_{r-1}$ have been found.
Let $(f',g') = (f,g)-\na^b s_{<r} = (f,g)-\sum_{i=0}^{r-1} (\na^h s_i, -\na^c s_i)$.  Note that, by assumption, $f'|_{<r} =0$ and hence,
by Lemma \ref{lem-one}, also $g'|_{<r}=0$.  Let $u\in (\cB\otimes \cB)$ be
in the range of the composition $R\to V\otimes T(V)^+\to \cB\otimes \cB$.  Since $\m(u)=0$ we have from Equation \eqref{eq-compat} that
$f_r(u)\in P(\cB)$.  Therefore, the composition $R\to V\otimes T(V)^+\to \cB\otimes \cB\stackrel{f}{\to} \cB$ has range in $P(\cB)$ and
must be the zero map.  By Theorem \ref{thm-one}, we get a map $t\colon \cB\to \cB$ such that $f_r=t\m$.  Now define $s_r=t_r$ and observe that
$f'_{\le r} = f'_r = \na^h s_r$. Hence, by Lemma \ref{lem-one}, we also have $g'_r = -\na^c s_r$.
\end{proof}


\section{Nichols algebras of diagonal type}\label{s:diagonal}

In what follows $(V,c)$ will denote a braided vector space of diagonal type, $\dim V=\theta$, such that the associated Nichols algebra $\cB(V)$
has a finite root system $\Delta_+^V$ in the sense of \cite{H-classif}, i.e., $\Delta_+^V$ is the set of $\N_0^\theta$-degrees of generators of a PBW basis.
In particular, this is the case if $\cB(V)$ is finite-dimensional.
Let
\begin{align}
-c_{ij}^{V}&:= \min \left\{ n \in \N_0\;|\; (n+1)_{q_{ii}}
(1-q_{ii}^n q_{ij}q_{ji} )=0 \right\}, & j & \neq i. \label{defn:mij}
\end{align}
Now we fix
\begin{itemize}
  \item a basis $\{x_1,\ldots, x_\theta\}$ of $V$ and $q_{ij}\in\kk^\times$ such that $c(x_i\otimes x_j)=q_{ij}x_j\otimes x_i$,
  \item elements $x_\alpha\in\cB(V)$ of degree $\alpha$, $\alpha\in \Delta_+^V$, which generate a PBW basis, see \cite{Ang-crelle}.
\end{itemize}
We use the following notation:
\begin{itemize}
  \item $\widetilde{q_{ij}}:=q_{ij}q_{ji}$ for all $i\neq j$.
  \item $\chi\colon\Z^\theta\times\Z^\theta\to\kk^\times$ is the bicharacter such that $\chi(\alpha_i,\alpha_j)=q_{ij}$, $1\le i,j\le\theta$, where 
  $\{\alpha_1,\ldots,\alpha_\theta\}$ is the canonical basis of $\Z^\theta$.
  \item $N_\alpha$ is the order of $q_\alpha:=\chi(\alpha,\alpha)$, $\alpha\in\Delta_+^V$.
  \item $\G_N$ is the group of roots of unity of order $N$ and $\G_N'$ is the subset of primitive roots of unity of order $N$, $N\in\N$.
  \item $\cO(V)$ is the set of Cartan roots of $V$, i.e., the orbit of Cartan vertices under the action of the Weyl groupoid. Recall that $i\in\{1,\dots,\theta\}$ is a Cartan vertex of $V$ if $\widetilde{q_{ij}}=q_{ii}^{c_{ij}^{V}}$ for all $j \neq i$ \cite[Definition 2.6]{Ang-crelle}.
\end{itemize}
We recall the following result, which gives a presentation by generators and relations for any Nichols algebra of diagonal type with finite root system.

\begin{thm}\label{thm:presentacion minima}\cite{Ang-crelle}
$\cB(V)$ is presented by generators $x_1,\ldots,x_\theta$ and relations:
\begin{align}
&x_\alpha^{N_\alpha}, &\alpha \in \cO(V);\label{eqn:potencia raices}
\\ &(\ad_cx_i)^{1-c_{ij}^V}x_j, & q_{ii}^{1-c_{ij}^V} \neq 1;\label{eqn:relacion quantum Serre}
\\ &x_i^{N_i}, & i \mbox{ is not a Cartan vertex};\label{eqn:potencia raices simples}
\end{align}

\noindent $\diamond$ if $i,j \in \{1, \ldots, \theta \}$ satisfy
$q_{ii}=\widetilde{q_{ij}}=q_{jj}=-1$, and there exists $k\neq i,j$
such that $\widetilde{q_{ik}}^2\neq1$ or
$\widetilde{q_{jk}}^2\neq1$,
\begin{equation}\label{eqn:relacion dos vertices con -1}
x_{ij}^2;
\end{equation}

\noindent $\diamond$ if $i,j,k \in \{1, \ldots, \theta \}$ satisfy
$q_{jj}=-1$,
$\widetilde{q_{ik}}=\widetilde{q_{ij}}\widetilde{q_{kj}}=1$,
$\widetilde{q_{ij}}\neq -1$,
\begin{equation}\label{eqn:relacion vertice -1}
\left[ x_{ijk} , x_j \right]_c;
\end{equation}

\noindent $\diamond$ if $i,j \in \{1, \ldots, \theta \}$ satisfy
$q_{jj}=-1$, $q_{ii}\widetilde{q_{ij}}\in \G'_6$,
$\widetilde{q_{ij}}\neq -1$, and also $q_{ii}\in \G'_3$ or
$-c_{ij}^V\geq 3$,
\begin{equation}\label{eqn:relacion estandar B2}
\left[ x_{iij}, x_{ij} \right]_c;
\end{equation}

\noindent $\diamond$ if $i,j,k \in \{1, \ldots, \theta \}$ satisfy $q_{ii}=\pm \widetilde{q_{ij}}\in\G'_3$, $\widetilde{q_{ik}}=1$, and also
$-q_{jj}=\widetilde{q_{ij}}\widetilde{q_{jk}}=1$ or $q_{jj}^{-1}=\widetilde{q_{ij}}=\widetilde{q_{jk}}\neq -1$,
\begin{equation}\label{eqn:relacion estandar B3}
\left[ x_{iijk} , x_{ij} \right]_c;
\end{equation}

\noindent $\diamond$ if $i,j,k \in \{1, \ldots, \theta \}$ satisfy $\widetilde{q_{ik}}, \widetilde{q_{ij}}, \widetilde{q_{jk}} \neq 1$,
\begin{equation}\label{eqn:relacion triangulo}
x_{ijk}-\frac{1-\widetilde{q_{jk}}}{q_{kj}(1-\widetilde{q_{ik}})}\left[x_{ik},x_j\right]_c-q_{ij}(1-\widetilde{q_{jk}}) \ x_jx_{ik};
\end{equation}

\noindent $\diamond$ if $i,j,k\in\{1,\ldots,\theta\}$ satisfy one of the following situations

\vi $q_{ii}=q_{jj}=-1$, $\widetilde{q_{ij}}^2= \widetilde{q_{jk}}^{-1}$, $\widetilde{q_{ik}}=1$, or

\vii $\widetilde{q_{ij}}=q_{jj}=-1$, $q_{ii}= -\widetilde{q_{jk}}^2\in\G'_3$, $\widetilde{q_{ik}}=1$, or

\viii $q_{kk}=\widetilde{q_{jk}}=q_{jj}=-1$, $q_{ii}= -\widetilde{q_{ij}}\in\G'_3$, $\widetilde{q_{ik}}=1$, or

\viv $q_{jj}=-1$, $\widetilde{q_{ij}}=q_{ii}^{-2}$, $\widetilde{q_{jk}}=-q_{ii}^3$, $\widetilde{q_{ik}}=1$, or

\vv $q_{ii}=q_{jj}=q_{kk}=-1$, $\pm\widetilde{q_{ij}}=\widetilde{q_{jk}}\in\G'_3$, $\widetilde{q_{ik}}=1$,

\begin{equation}\label{eqn:relacion super C3}
\left[ \left[x_{ij}, x_{ijk} \right]_c, x_j \right]_c;
\end{equation}

\noindent $\diamond$ if $i,j,k\in\{1,\ldots,\theta\}$ satisfy $q_{ii}=q_{jj}=-1$, $\widetilde{q_{ij}}^3=\widetilde{q_{jk}}^{-1}$, $\widetilde{q_{ik}}=1$,
\begin{equation}\label{eqn:relacion super G3}
\left[ \left[x_{ij}, \left[x_{ij}, x_{ijk} \right]_c \right]_c, x_j \right]_c;
\end{equation}

\noindent $\diamond$ if $i,j,k\in\{1,\ldots,\theta\}$ satisfy
$q_{jj}=\widetilde{q_{ij}}^2=\widetilde{q_{jk}}\in \G'_3$,
$\widetilde{q_{ik}}=1$,
\begin{equation}\label{eqn:relacion super C3 raiz de orden 3}
\left[ \left[ x_{ijk} , x_j \right]_c x_j \right]_c;
\end{equation}

\noindent $\diamond$ if $i,j,k\in\{1,\ldots,\theta\}$ satisfy
$q_{kk}=q_{jj}=\widetilde{q_{ij}}^{-1}=\widetilde{q_{jk}}^{-1}\in
\G'_9$, $\widetilde{q_{ik}}=1$, $q_{ii}=q_{kk}^6$
\begin{equation}\label{eqn:relacion fila 18 rango 3}
\left[ \left[ x_{iij} , x_{iijk} \right]_c, x_{ij} \right]_c;
\end{equation}

\noindent $\diamond$ if $i,j,k\in\{1,\ldots,\theta\}$ satisfy
$q_{ii}=\widetilde{q_{ij}}^{-1}\in \G'_9$,
$q_{jj}=\widetilde{q_{jk}}^{-1}=q_{ii}^5$, $\widetilde{q_{ik}}=1$,
$q_{kk}=q_{ii}^6$
\begin{equation}\label{eqn:relacion fila 18 rango 3, caso 2}
[\left[x_{ijk}, x_{j} \right]_c, x_k]_c -(1 +
\widetilde{q_{jk}})^{-1}q_{jk} \left[ \left[x_{ijk}, x_{k} \right]_c
, x_{j} \right]_c;
\end{equation}

\noindent $\diamond$ if $i,j,k\in\{1,\ldots,\theta\}$ satisfy
$q_{jj}=\widetilde{q_{ij}}^3=\widetilde{q_{jk}}\in \G'_4$,
$\widetilde{q_{ik}}=1$,
\begin{equation}\label{eqn:relacion super G3 raiz de orden 4}
\left[ \left[ \left[ x_{ijk} , x_j \right]_c, x_j \right]_c, x_j
\right]_c;
\end{equation}

\noindent $\diamond$ if $i,j,k\in\{1,\ldots,\theta\}$ satisfy
$q_{ii} = \widetilde{q_{ij}} =-1$, $q_{jj}= \widetilde{q_{jk}}^{-1}
\neq-1$, $\widetilde{q_{ik}}=1$,
\begin{equation}\label{eqn:relacion parecida a super C3}
\left[x_{ij}, x_{ijk} \right]_c;
\end{equation}

\noindent $\diamond$ if $i,j,k \in\{1,\ldots,\theta\}$ satisfy
$q_{ii}= q_{kk} =-1$, $\widetilde{q_{ik}}=1$, $\widetilde{q_{ij}}
\in \G'_3$, $q_{jj}= -\widetilde{q_{jk}} = \pm \widetilde{q_{ij}}$,
\begin{equation}\label{eqn:relacion parecida a super C3-bis}
[x_i, x_{jjk}]_c -(1 + q_{jj}^2)q_{kj}^{-1} \left[x_{ijk}, x_{j}
\right]_c - (1 + q_{jj}^2)(1 + q_{jj}) q_{ij} x_j x_{ijk};
\end{equation}

\noindent $\diamond$ if $i,j,k,l\in\{1,\ldots,\theta \}$ satisfy
$q_{jj}\widetilde{q_{ij}}= q_{jj}\widetilde{q_{jk}}=1$, $q_{kk}=-1$,
$\widetilde{q_{ik}}=\widetilde{q_{il}}=\widetilde{q_{jl}}=1$,
$\widetilde{q_{jk}}^2= \widetilde{q_{lk}}^{-1}= q_{ll}$,
\begin{equation}\label{eqn:relacion super C4}
\left[\left[\left[x_{ijkl},x_k\right]_c, x_j \right]_c, x_k \right]_c;
\end{equation}

\noindent $\diamond$ if $i,j,k,l\in\{1,\ldots,\theta \}$ satisfy
$\widetilde{q_{jk}}= \widetilde{q_{ij}}= q_{jj}^{-1}\in
\G'_4\cup\G'_6$, $q_{ii}=q_{kk}=-1$,
$\widetilde{q_{ik}}=\widetilde{q_{il}}=\widetilde{q_{jl}}=1$,
$\widetilde{q_{jk}}^3= \widetilde{q_{lk}}$,
\begin{equation}\label{eqn:relacion super C4 modificada}
\left[\left[x_{ijk},\left[x_{ijkl}, x_k \right]_c\right]_c, x_{jk}
\right]_c;
\end{equation}

\noindent $\diamond$ if $i,j,k,l\in\{1,\ldots,\theta \}$ satisfy
$q_{ll}=\widetilde{q_{lk}}^{-1}=
q_{kk}=\widetilde{q_{jk}}^{-1}=q^2$, $\widetilde{q_{ij}}=
q_{ii}^{-1}=q^3$ for some $q\in \kk^\times$, $q_{jj}=-1$,
$\widetilde{q_{ik}}=\widetilde{q_{il}}=\widetilde{q_{jl}}=1$,
\begin{equation}\label{eqn:relacion super F4-1}
\left[\left[\left[x_{ijk},x_j\right]_c, \left[x_{ijkl},x_j\right]_c
\right]_c, x_{jk} \right]_c;
\end{equation}

\noindent $\diamond$ if $i,j,k,l\in\{1,\ldots,\theta \}$ satisfy one
of the following situations

\vi $q_{kk}=-1$, $q_{ii}=\widetilde{q_{ij}}^{-1}= q_{jj}^2$,
$\widetilde{q_{kl}}= q_{ll}^{-1}= q_{jj}^3$, $\widetilde{q_{jk}}=
q_{jj}^{-1}$,
$\widetilde{q_{ik}}=\widetilde{q_{il}}=\widetilde{q_{jl}}=1$, or

\vii $q_{ii}=\widetilde{q_{ij}}^{-1}= -q_{ll}^{-1}=-\widetilde{q_{kl}}$,
$q_{jj}=\widetilde{q_{jk}}=q_{kk}=-1$,
$\widetilde{q_{ik}}=\widetilde{q_{il}}=\widetilde{q_{jl}}=1$,

\begin{equation}\label{eqn:relacion super F4-2}
\left[\left[x_{ijkl}, x_j \right]_c, x_k \right]_c-
q_{jk}(\widetilde{q_{ij}}^{-1}-q_{jj})
\left[\left[x_{ijkl},x_k\right]_c, x_j \right]_c;
\end{equation}

\noindent $\diamond$ if $i,j,k\in\{1,\ldots,\theta\}$ satisfy
$\widetilde{q_{jk}}=1$,
$q_{ii}=\widetilde{q_{ij}}=-\widetilde{q_{ik}}\in \G'_3$,
\begin{equation}
\left[x_i, \left[ x_{ij},x_{ik} \right]_c
\right]_c+q_{jk}q_{ik}q_{ji} \left[ x_{iik} ,x_{ij} \right]_c+q_{ij}
\, x_{ij} x_{iik};\label{eqn:relacion mji,mjk=2}
\end{equation}

\noindent $\diamond$ if $i,j,k\in\{1,\ldots,\theta\}$ satisfy
$q_{jj}=q_{kk}=\widetilde{q_{jk}}=-1$,
$q_{ii}=-\widetilde{q_{ij}}\in\G'_3$, $\widetilde{q_{ik}}=1$,
\begin{equation}\label{eqn:relacion especial rank3}
\left[x_{iijk}, x_{ijk} \right]_c;
\end{equation}

\noindent $\diamond$ if $i,j\in\{1,\ldots,\theta\}$ satisfy $-q_{ii}, -q_{jj}, q_{ii}\widetilde{q_{ij}}, q_{jj}\widetilde{q_{ij}} \neq 1$,
\begin{equation}\label{eqn:relacion mij,mji mayor que 1}
(1-\widetilde{q_{ij}})q_{jj}q_{ji}\left[x_i, \left[ x_{ij}, x_j \right]_c \right]_c - (1+q_{jj})(1-q_{jj}\widetilde{q_{ij}})x_{ij}^2;
\end{equation}

\noindent $\diamond$ if $i,j\in\{1,\ldots,\theta\}$ satisfy that $-c_{ij}^V\in \{4,5\}$, or $q_{jj}=-1$, $-c_{ij}^V=3$, $q_{ii} \in \G'_4$,
\begin{equation}\label{eqn:relacion mij mayor que dos, raiz alta}
\left[x_i,x_{3\alpha_i+2\alpha_j}\right]_c-\frac{1-q_{ii}\widetilde{q_{ij}}-q_{ii}^2\widetilde{q_{ij}}^2q_{jj}}{(1-q_{ii}\widetilde{q_{ij}})q_{ji}}
x_{iij}^2;
\end{equation}

\noindent $\diamond$ if $i,j\in\{1,\ldots,\theta\}$ satisfy $4\alpha_i+3\alpha_j\notin \Delta_+^V$, $q_{jj}=-1$ or $m_{ji}\geq2$, and also $-c_{ij}^V\geq 3$,
or $-c_{ij}^V=2$, $q_{ii}\in\G'_3$,
\begin{equation}\label{eqn:relacion (m+1)alpha i+m alpha j, caso 2}
x_{4\alpha_i+3\alpha_j}=[x_{3\alpha_i+2\alpha_j}, x_{ij} ]_c;
\end{equation}

\noindent $\diamond$ if $i,j\in\{1,\ldots,\theta\}$ satisfy $3\alpha_i+2\alpha_j\in\Delta_+^V$, $5\alpha_i+3\alpha_j\notin\Delta_+^V$, and
$q_{ii}^3\widetilde{q_{ij}}, q_{ii}^4\widetilde{q_{ij}}\neq 1$,
\begin{equation}\label{eqn:relacion con 2alpha i+alpha j, caso 2}
[x_{iij}, x_{3\alpha_i+2\alpha_j}]_c;
\end{equation}

\noindent $\diamond$ if $i,j\in\{1,\ldots,\theta\}$ satisfy $4\alpha_i+3\alpha_j\in\Delta_+^V$, $5\alpha_i+4\alpha_j\notin\Delta_+^V$,
\begin{equation}\label{eqn:relacion (m+1)alpha i+m alpha j, caso 3}
x_{5\alpha_i+4\alpha_j}=[x_{4\alpha_i+3\alpha_j}, x_{ij} ]_c;
\end{equation}

\noindent $\diamond$ if $i,j\in\{1,\ldots,\theta\}$ satisfy $5\alpha_i+2\alpha_j\in\Delta_+^V$, $7\alpha_i+3\alpha_j \notin \Delta_+^V$,
\begin{equation}\label{eqn:relacion con 2alpha i+alpha j, caso 1}
[[x_{iiij}, x_{iij}], x_{iij} ]_c;
\end{equation}

\noindent $\diamond$ if $i,j\in\{1,\ldots,\theta\}$ satisfy $q_{jj}=-1$, $5\alpha_i+4\alpha_j\in\Delta_+^V$,
\begin{equation}\label{eqn:relacion potencia alta}
[x_{iij},x_{4\alpha_i+3\alpha_j}]_c- \frac{b-(1+q_{ii})(1-q_{ii}\zeta)(1+\zeta+q_{ii}\zeta^2)q_{ii}^6\zeta^4} {a\ q_{ii}^3q_{ij}^2q_{ji}^3} x_{3\alpha_i+2\alpha_j}^2,
\end{equation}
where $\zeta=\widetilde{q_{ij}}$, $a=(1-\zeta)(1-q_{ii}^4\zeta^3)-(1-q_{ii}\zeta)(1+q_{ii})q_{ii}\zeta$, $b=(1-\zeta)(1-q_{ii}^6\zeta^5)-a\ q_{ii}\zeta$.\qed
\end{thm}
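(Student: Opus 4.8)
The plan is to identify the algebra presented by the listed relations with $\cB(V)$ through a Hilbert-series comparison resting on Kharchenko's PBW theorem. Write $\widehat{\cB}=T(V)/J$, where $J$ is the ideal generated by the relations \eqref{eqn:potencia raices}--\eqref{eqn:relacion potencia alta}. Since $\cB(V)=T(V)/\cI(V)$, the first step is to check that each listed element lies in $\cI(V)$, which produces a canonical graded surjection $\pi\colon\widehat{\cB}\twoheadrightarrow\cB(V)$. Recall that $\cI(V)$ is the unique maximal graded Hopf ideal of $T(V)$ supported in degrees $\ge 2$, so it suffices to show that each relation is primitive in $\cB(V)$ (a primitive element of degree $\ge 2$ must vanish). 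For the quantum Serre relations \eqref{eqn:relacion quantum Serre} and the power relations \eqref{eqn:potencia raices}, \eqref{eqn:potencia raices simples} this is classical; for the bracketed and non-homogeneous relations I would compute the braided coproduct, or equivalently apply the braided skew-derivations $\partial_i$, and verify primitivity modulo the previously imposed relations.

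The decisive step is the reverse inclusion $J\supseteq\cI(V)$, i.e.\ injectivity of $\pi$. For this I would first show $J$ is a Hopf ideal, so that $\widehat{\cB}$ is a graded braided Hopf algebra (a pre-Nichols algebra) surjecting onto $\cB(V)$; the coproduct computations of the first step give that each generator of $J$ is skew-primitive modulo lower relations, which is exactly what is needed. I would then produce a spanning set of $\widehat{\cB}$ by the ordered monomials $x_{\beta_1}^{a_1}\cdots x_{\beta_m}^{a_m}$, where $\beta_1>\cdots>\beta_m$ exhausts $\Delta_+^V$ in a fixed convex order and $0\le a_i<N_{\beta_i}$, the bounds $N_{\beta_i}$ being forced by the power relations \eqref{eqn:potencia raices}, \eqref{eqn:potencia raices simples}. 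This demands a straightening procedure: every product $x_\gamma x_\beta$ with $\gamma<\beta$ must be rewritten modulo $J$ as a combination of monomials that are larger in the chosen order. By Kharchenko's theorem these same monomials are a basis of $\cB(V)$, so $\pi$ carries a spanning set of $\widehat{\cB}$ bijectively onto a basis of $\cB(V)$; hence $\pi$ is an isomorphism and the relations present $\cB(V)$.

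The hard part is the straightening, i.e.\ showing the listed relations already suffice to bring every monomial into PBW form. The difficulty has two layers. First, the ``extra'' relations \eqref{eqn:relacion dos vertices con -1}--\eqref{eqn:relacion potencia alta} come precisely from Heckenberger's classification of arithmetic root systems in ranks two and three, and each must be matched to the specific braiding matrix in which a particular bracket of root vectors becomes reducible; this forces an exhaustive low-rank analysis. Second, to propagate the finite low-rank list to arbitrary rank I would invoke the Weyl groupoid: the Lusztig-type isomorphisms $T_i\colon\cB(V)\to\cB(R_iV)$ permute the PBW root vectors and carry relations to relations, so that any relation supported on vertices $\{i,j,k,\dots\}$ can be reduced, after suitable reflections, to a rank $\le 3$ situation already handled. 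Controlling the action of $T_i$ on the non-homogeneous relations, and certifying that no relations beyond those listed are ever needed, is the delicate core of the argument.
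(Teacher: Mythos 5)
You should first be aware that the paper you are comparing against contains no proof of this statement: Theorem \ref{thm:presentacion minima} is imported verbatim from \cite{Ang-crelle} (note the citation in the theorem header and the \qed appended to the statement with no proof environment), and the rest of the paper --- Proposition \ref{prop:gchi different}, Theorems \ref{thm:homzero}, \ref{thm:diagonal} and \ref{thm:diagonal-pre-Nichols} --- uses it strictly as a black box. So the only meaningful benchmark is Angiono's original argument in the cited reference, and measured against that your outline does reproduce the correct architecture: show each listed element has primitive image in $\cB(V)$ (hence vanishes, since $P(\cB(V))=V$ lives in degree $1$), so the ideal $J$ they generate sits inside $\cI(V)$ and one gets a surjection $T(V)/J\twoheadrightarrow\cB(V)$; show $J$ is a Hopf ideal; then close the gap by a PBW/Hilbert-series comparison resting on Kharchenko's theorem, with the case analysis organized by Heckenberger's classification and the Weyl groupoid. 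This is, in broad strokes, how the cited proof is structured (the straightening-to-a-convex-order step is the content of Angiono's earlier presentation theorem, and the Crelle paper reduces the minimal list above to that presentation).

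The genuine gap is that your text is a plan rather than a proof, and for this particular theorem the plan is the easy part: the entire mathematical content is exactly the two steps you label ``delicate'' and defer --- (i) verifying that the convexity/straightening identities $x_\gamma x_\beta \equiv \sum(\text{larger monomials})$ hold modulo the \emph{minimal} relations listed, and (ii) certifying, case by case across Heckenberger's tables, that no relation beyond those listed is ever needed. That verification occupies the bulk of a long paper and cannot be subsumed under an appeal to ``an exhaustive low-rank analysis.'' Two concrete technical slips would also derail the execution as written. First, the Lusztig-type reflections are not isomorphisms $T_i\colon\cB(V)\to\cB(R_iV)$ of Nichols algebras; they are defined on Drinfeld doubles (equivalently, on the distinguished pre-Nichols algebras of Theorem \ref{thm:diagonal-pre-Nichols}, where the root-vector powers \eqref{eqn:potencia raices} are \emph{not} imposed), and extracting from them statements about the Nichols-algebra quotients is itself one of the nontrivial points of the actual proof. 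Second, the reduction via the Weyl groupoid must go down to rank $4$, not rank $3$: the list contains relations with support of size four, e.g.\ \eqref{eqn:relacion super C4}, \eqref{eqn:relacion super C4 modificada}, \eqref{eqn:relacion super F4-1} and \eqref{eqn:relacion super F4-2}, so a rank-$\le 3$ analysis cannot be exhaustive.
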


We fix a realization of $(V,c)$ as a Yetter-Drinfeld module over an abelian group $\Gamma$, i.e., there exist $g_i\in\Gamma$,
$\chi_i\in\widehat{\Gamma}$ such that $\chi_j(g_i)=q_{ij}$ and we make $V$ an object of $\ydg$ by declaring  $x_i\in V_{g_i}^{\chi_i}$.
Let $\cR_V$ be the set of relations defining $\cB(V)$ according to the previous theorem. Note that $\kk\cR_V$ is a Yetter-Drinfeld
submodule of $T(V)$, because each relation is $\Z^\theta$-homogeneous. For each $R\in\cR_V$ of degree $(a_1,\ldots,a_\theta)\in\Z^\theta$, set
\begin{equation}
g_R:= g_1^{a_1}\cdots g_\theta^{a_\theta} , \quad \chi_R:= \chi_1^{a_1}\cdots \chi_\theta^{a_\theta}, \qquad \text{ so }R\in T(V)_{g_R}^{\chi_R}.
\end{equation}
The \emph{support} of $R\in\cR_V$ is the set $\supp R:=\{i\;|\; a_i\neq 0\}$, i.e., the set of indices of letters $x_i$ appearing in $R$.

\begin{prop}\label{prop:gchi different}
For every $R\in\cR_V$ and $t\in\{1,2,\ldots,\theta\}$, we have $(g_R,\chi_R)\neq (g_t,\chi_t)$.
\end{prop}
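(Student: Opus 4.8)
The plan is to derive from the assumed equality $(g_R,\chi_R)=(g_t,\chi_t)$ a rigid system of numerical identities among the $q_{ij}$, and then to contradict, family by family, the very inequalities that force $R$ into the presentation of Theorem~\ref{thm:presentacion minima}.

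First I would record exactly what the equality forces. Write $\beta=\sum_i a_i\alpha_i\in\Z^\theta$ for the degree of $R$, so that $\chi_R=\chi^\beta$ and $g_R=g^\beta$. Evaluating the equal characters $\chi_R=\chi_t$ at each $g_s$, and the equal group elements $g_R=g_t$ at each $\chi_s$, yields for every $s\in\{1,\dots,\theta\}$ the \emph{linking identities} $\chi(\alpha_s,\beta)=\prod_i q_{si}^{a_i}=q_{st}$ and $\chi(\beta,\alpha_s)=\prod_i q_{is}^{a_i}=q_{ts}$; equivalently, $\beta-\alpha_t$ lies in the radical of the bicharacter $\chi$. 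These two families of identities say precisely that $\chi(-,\beta)=\chi(-,\alpha_t)$ and $\chi(\beta,-)=\chi(\alpha_t,-)$ as characters of the root lattice, and in particular they force the \emph{self-braiding identity} $q_\beta=\chi(\beta,\beta)=\chi(\alpha_t,\beta)=\chi(\alpha_t,\alpha_t)=q_{tt}$. So it suffices to refute this whole system for each $R\in\cR_V$ and each $t$.

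The power relations fall out immediately. For $x_\alpha^{N_\alpha}$ with $\alpha\in\cO(V)$ (relation \eqref{eqn:potencia raices}) and for $x_i^{N_i}$ (relation \eqref{eqn:potencia raices simples}), the degree $\beta$ is $N_\alpha\alpha$, respectively $N_i\alpha_i$, so $q_\beta=q_\alpha^{N_\alpha^2}=1$, respectively $q_{ii}^{N_i^2}=1$, because $N_\alpha=\ord(q_\alpha)$. Since $\cB(V)$ has a finite root system we have $q_{tt}\neq 1$ for every $t$ (a vertex with $q_{tt}=1$ would have an infinite $\alpha_t$-string), so the necessary identity $q_\beta=q_{tt}$ fails and every power relation is eliminated at once.

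For all remaining families the support of $\beta$ has size $\geq 2$ and $q_\beta\neq 1$ in general, so the self-braiding test alone no longer suffices and one must use the full linking identities. Concretely, for each family I would substitute its defining constraints into $\chi(\beta,\alpha_s)=q_{ts}$ and $\chi(\alpha_s,\beta)=q_{st}$ evaluated at an index $s\in\supp R$ (and, when $t\notin\supp R$, also at $s=t$), and derive a contradiction with an inequality that family must satisfy --- for instance $q_{ii}^{1-c_{ij}^V}\neq 1$ for the quantum Serre relations \eqref{eqn:relacion quantum Serre}, or normalization conditions such as $q_{jj}=-1$, $q_{ii}\widetilde{q_{ij}}\in\G'_6$ and $\widetilde{q_{ij}}\neq-1$ for the higher relations. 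I expect the main obstacle to be the length and heterogeneity of this case analysis: the presentation contains many exotic families (the super-type relations \eqref{eqn:relacion super C3}--\eqref{eqn:relacion super F4-2} and the various rank-$3$ and rank-$4$ relations) for which $q_\beta\neq 1$ is genuine and can even coincide with some $q_{ss}$, so there the finer linking identities together with the family-specific numerology are unavoidable. Organizing the argument efficiently --- ideally by support size and by whether $t\in\supp R$, and reusing $q_\beta=q_{tt}$ to cut down the candidate vertices $t$ before invoking the linking conditions --- is the delicate bookkeeping the proof must carry out.
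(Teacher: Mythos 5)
Your set-up is exactly the paper's: assuming $(g_R,\chi_R)=(g_t,\chi_t)$ one extracts the numerical consequences $\chi_R(g_R)=q_{tt}$ and $\chi_R(g_s)\chi_s(g_R)=\widetilde{q_{st}}$, and since $q_{tt}\neq 1$ for a diagonal braiding with finite root system, every relation whose degree $\beta$ satisfies $\chi(\beta,\beta)=1$ is eliminated at once. This correctly disposes of the root-vector powers \eqref{eqn:potencia raices} and \eqref{eqn:potencia raices simples}; in fact the same test also kills \eqref{eqn:relacion dos vertices con -1}, case (v) of \eqref{eqn:relacion super C3}, case (ii) of \eqref{eqn:relacion super F4-2}, \eqref{eqn:relacion especial rank3} and \eqref{eqn:relacion potencia alta}, which your sketch does not notice.

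From there on, however, what you have is a plan, not a proof: the entire substance of the proposition is the family-by-family verification, and you carry out none of it. More importantly, the mechanism you announce for producing contradictions --- ``contradict the very inequalities that force $R$ into the presentation'' --- is not adequate. In several families the linking identities are perfectly consistent with the hypotheses of the relation, and the paper's contradiction comes from an external input you never mention: Heckenberger's classification \cite{H-classif}. For instance, for \eqref{eqn:relacion vertice -1} with $t=j$ the identities yield $q_{ii}^2\widetilde{q_{ij}}=q_{kk}^2\widetilde{q_{kj}}=1$, which violates nothing in that relation's hypotheses; the paper concludes because no generalized Dynkin diagram with these constraints occurs in the classification (the same happens for \eqref{eqn:relacion parecida a super C3} and \eqref{eqn:relacion mji,mjk=2}). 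For the rank-two families \eqref{eqn:relacion mij,mji mayor que 1} through \eqref{eqn:relacion con 2alpha i+alpha j, caso 1} the paper does not reason from the stated inequalities at all: it enumerates, out of the classification, the finitely many admissible diagrams in which each relation is needed and checks $\chi_R(g_R)$ and $\chi_R(g_t)\chi_t(g_R)$ diagram by diagram. Finally, two further reductions in the paper are citations rather than computations --- the case $t\notin\supp R$ rests on the propositions of \cite[Section 3]{Ang-crelle}, and the quantum Serre relations \eqref{eqn:relacion quantum Serre} on \cite[Proposition 3.1]{AnGa} --- whereas your plan proposes to redo both by hand, with no evidence that the linking identities alone suffice there. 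So the concrete gap is: your scheme lacks the classification of arithmetic root systems as an input (both as a list of admissible rank-two diagrams and as a nonexistence oracle for rank-three configurations), and without it the announced contradictions cannot be completed.
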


\begin{proof}
We prove this for each defining relation. For \eqref{eqn:relacion quantum Serre}, see \cite[Proposition 3.1]{AnGa}; the proof does not use that the braiding is of standard type.

We discard easily the cases \eqref{eqn:potencia raices}, \eqref{eqn:potencia raices simples}, \eqref{eqn:relacion dos vertices con -1}, \eqref{eqn:relacion super C3}\vv, \eqref{eqn:relacion super F4-2}\vii, \eqref{eqn:relacion especial rank3}, \eqref{eqn:relacion potencia alta} because $\chi_R(g_R)=1$.

For the remaining cases, note that the propositions in \cite[Section 3]{Ang-crelle} show that $(g_R,\chi_R)\neq (g_t,\chi_t)$ for each $t\notin \supp R$. 
Therefore, we have to consider only the case $t\in\supp(R)$.

For each remaining relation $R$, we compute $\chi_R(g_R)$ and/or $\{ \chi_R(g_t)\chi_t(g_R) \;|\; t\in\supp R\}$.
\bigskip

\eqref{eqn:relacion vertice -1}: we have $\chi_R(g_R)=q_{ii}q_{kk}\neq q_{ii},q_{kk}$. Suppose that $g_R=g_j$, $\chi_R=\chi_j$. Then $\widetilde{q_{ij}}=\chi_R(g_i)\chi_i(g_R)=(q_{ii}\widetilde{q_{ij}})^2$ and $\widetilde{q_{kj}}=\chi_R(g_k)\chi_k(g_R)=(q_{kk}\widetilde{q_{kj}})^2$, so $q_{ii}^2\widetilde{q_{ij}}=q_{kk}^2\widetilde{q_{kj}}=1$. But such a generalized Dynkin diagram is not in \cite{H-classif}, a contradiction.
\smallskip

\eqref{eqn:relacion estandar B2}: now $\chi_R(g_R)=q_{ii}^3\neq q_{ii}$, $\widetilde{q_{ij}}\neq\chi_R(g_i)\chi_i(g_R)=\widetilde{q_{ij}}^2$, so
$(g_R,\chi_R)\neq (g_i,\chi_i), (g_j,\chi_j)$.
\smallskip

\eqref{eqn:relacion estandar B3}: for both sets of conditions, $\widetilde{q_{ij}}q_{jj}^2\widetilde{q_{jk}}=1$ so $\chi_R(g_R)=q_{ii}q_{kk}\neq q_{ii},q_{kk}$. Suppose that $g_R=g_j$, $\chi_R=\chi_j$. But $\widetilde{q_{ij}}\neq\chi_R(g_i)\chi_i(g_R)=\widetilde{q_{ij}}^2$, a contradiction.

\eqref{eqn:relacion triangulo}: recall that $\widetilde{q_{ij}} \widetilde{q_{ik}} \widetilde{q_{jk}} =1$. Suppose that $g_R=g_i$, $\chi_R=\chi_i$.  Then $q_{ii}=\chi_R(g_i)=q_{ii}q_{ij}q_{ik}$, so $q_{ij}q_{ik}=1$. Also $q_{ji}q_{ki}=1$, so $\widetilde{q_{ij}}\widetilde{q_{ik}}=1$ and then $\widetilde{q_{jk}}=1$, a contradiction.
\smallskip

\eqref{eqn:relacion super C3}\vi: simply note that $\chi_R(g_R)=-q_{kk}\neq -1,q_{kk}$.
\smallskip

\eqref{eqn:relacion super C3}\vii: as $\chi_R(g_R)=q_{ii}q_{kk}\neq q_{ii}, q_{kk}$, the remaining case is $t=j$. But also $\widetilde{q_{ij}}=-1\neq \chi_R(g_i)\chi_i(g_R)=-q_{ii}$.
\smallskip

\eqref{eqn:relacion super C3}\viii: it follows since $\chi_R(g_R)=-q_{ii}\neq -1,q_{ii}$.
\smallskip

\eqref{eqn:relacion super C3}\viv again $\chi_R(g_R)=q_{ii}q_{kk}\neq q_{ii}, q_{kk}$, so the remaining case is $t=j$. Suppose that $g_R=g_j$, $\chi_R=\chi_j$,
so $1=q_{jj}^2=\chi_R(g_j)\chi_j(g_R)=\widetilde{q_{ij}}^2\widetilde{q_{jk}}=-q_{ii}$, a contradiction.
\smallskip

\eqref{eqn:relacion super G3}: it follows since  $\chi_R(g_R)=-q_{kk}\neq -1,q_{kk}$.
\smallskip

\eqref{eqn:relacion super C3 raiz de orden 3}: again $\chi_R(g_R)=q_{ii}q_{kk}\neq q_{ii},q_{kk}$. Suppose that $g_R=g_j$, $\chi_R=\chi_j$, so
\begin{align*}
q_{jj}&=q_{ii}q_{kk}, & 1 &=\widetilde{q_{ij}}\widetilde{q_{jk}}=\chi_R(g_i)\chi_i(g_R)\chi_R(g_k)\chi_k(g_R)=q_{ii}^2q_{kk}^2=q_{jj}^2,
\end{align*}
which is a contradiction.
\smallskip

\eqref{eqn:relacion fila 18 rango 3}: it follows from $\chi_R(g_R)=q_{jj}^{-2}\neq q_{ii},q_{jj},q_{kk}$.
\smallskip

\eqref{eqn:relacion fila 18 rango 3, caso 2}: it follows from $\chi_R(g_R)=q_{jj}\neq q_{ii},q_{kk}$, and $\chi_R(g_i)\chi_i(g_R)=1\neq \widetilde{q_{ij}}$.
\smallskip

\eqref{eqn:relacion super G3 raiz de orden 4}: the proof is analogous to the one for \eqref{eqn:relacion super C3 raiz de orden 3}.
\smallskip

\eqref{eqn:relacion parecida a super C3}: As $\chi_R(g_R)=q_{jj}^2q_{kk}$ and $q_{jj}\neq\pm1$, we discard the case $t=k$. The case $t=j$ is also discarded because $1=\chi_R(g_i)\chi_i(g_R)\neq \widetilde{q_{ij}}$. Finally suppose that $\chi_R=\chi_i$, $g_R=g_i$, so $-1=\widetilde{q_{ij}} =\chi_R(g_j)\chi_j(g_R)=q_{jj}^3$. Then $q_{jj}\in\G_6'$ and $-1=\chi_R(g_R)=q_{jj}^2q_{kk}$, so $q_{kk}=q_{jj}$. But this case corresponds to a diagram which is not in \cite{H-classif}, a contradiction.
\smallskip

\eqref{eqn:relacion parecida a super C3-bis}: Note that $\chi_R(g_R)=q_{jj}^2\neq q_{jj},-1=q_{ii}=q_{kk}$ because $q_{jj}^2=\widetilde{q_{ij}}^2\in\G_3'$.
\smallskip

\eqref{eqn:relacion super C4}: simply $\chi_R(g_R)=-q_{ii}\neq q_{ii},q_{jj},q_{kk},q_{ll}$ in all the possible cases.
\smallskip

\eqref{eqn:relacion super C4 modificada}: for $t=l$ we have that $\chi_R(g_R)=q_{jj}^3q_{ll}\neq q_{ll}$, and for $t=i,k$ we have $\chi_R(g_j)\chi_j(g_R) =1 \neq \widetilde{q_{ij}}, \widetilde{q_{kj}}$. Suppose that $\chi_R=\chi_j$ and $g_R=g_j$. Then $\widetilde{q_{ij}}=\chi_R(g_i)\chi_i(g_R)= \widetilde{q_{ij}}^3$, which is a contradiction because $\widetilde{q_{ij}}\neq \pm1$.
\smallskip

\eqref{eqn:relacion super F4-1}: now, $\chi_R(g_i)\chi_i(g_R)=\chi_R(g_j)\chi_j(g_R)=1\neq \widetilde{q_{ij}}, \widetilde{q_{kj}}$, so we discard the cases $t=i,j,k$. Now $\widetilde{q_{kl}}=q_{kk}^{-1}\neq \chi_R(g_k)\chi_k(g_R)=q_{kk}$ so also $(\chi_R,g_R)\neq (\chi_l,g_l)$.
\smallskip

\eqref{eqn:relacion super F4-2}\vi: again $\chi_R(g_i)\chi_i(g_R)=\chi_R(g_j)\chi_j(g_R)=1\neq \widetilde{q_{ij}}, \widetilde{q_{kj}}$, and the cases $t=i,j,k$ are solved. As $\widetilde{q_{kl}}=q_{jj}^3\neq \chi_R(g_k)\chi_k(g_R)=q_{jj}$, we conclude that $(\chi_R,g_R)\neq (\chi_l,g_l)$.
\smallskip

\eqref{eqn:relacion mji,mjk=2}: for $t=j,k$ note that $\chi_R(g_i)\chi_i(g_R)=\widetilde{q_{ij}}\widetilde{q_{ik}}\neq \widetilde{q_{ij}}, \widetilde{q_{ik}}$. For $(\chi_R,g_R)=(\chi_i,g_i)$,
\begin{align*}
q_{ii}&=\chi_R(g_R)=-q_{jj}q_{kk}, &  \widetilde{q_{ij}}&=\chi_R(g_j)\chi_j(g_R)=\widetilde{q_{ij}}^3 q_{jj}^2, & \widetilde{q_{ik}}&=\chi_R(g_k)\chi_k(g_R)=\widetilde{q_{ik}}^3 q_{kk}^2,
\end{align*}
so $q_{jj}=-q_{kk}=\pm q_{ii}^2$, but this diagram is not in \cite{H-classif}, a contradiction.
\smallskip

\eqref{eqn:relacion mij,mji mayor que 1}: we look for the possible generalized Dynkin diagrams for which we need $R$.
\begin{description}
  \item[$\xymatrix{\circ^{\zeta^4}\ar@{-}[r]^{\zeta^9}&\circ^{\zeta^8}}$, $\zeta\in\G_{12}'$] $\chi_R(g_R)=1\neq q_{ii}, q_{jj}$.
  \item[$\xymatrix{\circ^{\zeta^8}\ar@{-}[r]^{\zeta}&\circ^{\zeta^8}}$, $\zeta\in\G_{12}'$] $\chi_R(g_i)\chi_i(g_R)=\chi_R(g_j)\chi_j(g_R)=\zeta^{10}\neq \widetilde{q_{ij}}$.
  \item[$\xymatrix{\circ^{-\zeta}\ar@{-}[r]^{\zeta^7}&\circ^{\zeta^3}}$, $\zeta\in\G_{9}'$] $\chi_R(g_R)=\zeta^8\neq q_{ii}, q_{jj}$.
  \item[$\xymatrix{\circ^{\zeta^6}\ar@{-}[r]^{\zeta^{11}}&\circ^{\zeta^8}}$, $\zeta\in\G_{24}'$] $\chi_R(g_R)=\zeta^4\neq q_{ii}, q_{jj}$.
  \item[$\xymatrix{\circ^{-\zeta}\ar@{-}[r]^{-\zeta^{12}}&\circ^{\zeta^5}}$, $\zeta\in\G_{15}'$] $\chi_R(g_R)=\zeta^{12}\neq q_{ii}, q_{jj}$.
\end{description}
\smallskip

\eqref{eqn:relacion mij mayor que dos, raiz alta}: we consider each possible generalized Dynkin diagram.
\begin{description}
  \item[$\xymatrix{\circ^{-\zeta}\ar@{-}[r]^{\zeta^3}&\circ^{-1}}$, $\zeta\in\G_{5}'$] $\chi_R(g_R)=1\neq q_{ii}, q_{jj}$.
  \item[$\xymatrix{\circ^{\zeta^3}\ar@{-}[r]^{-\zeta^4}&\circ^{-\zeta^{11}}}$, $\zeta\in\G_{15}'$] $\chi_R(g_R)=\zeta^{11}\neq q_{ii}, q_{jj}$.
  \item[$\xymatrix{\circ^{\zeta^8}\ar@{-}[r]^{\zeta^3}&\circ^{-1}}$, $\zeta\in\G_{20}'$] $\chi_R(g_R)=\zeta^{12}\neq q_{ii}, q_{jj}$.
  \item[$\xymatrix{\circ^{\zeta^8}\ar@{-}[r]^{\zeta^{13}}&\circ^{-1}}$, $\zeta\in\G_{20}'$] $\chi_R(g_R)=\zeta^{12}\neq q_{ii}, q_{jj}$.
  \item[$\xymatrix{\circ^{-\zeta^3}\ar@{-}[r]^{\zeta^3}&\circ^{-1}}$, $\zeta\in\G_{7}'$] $\chi_R(g_R)=\zeta^2\neq q_{ii}, q_{jj}$.
  \item[$\xymatrix{\circ^{\zeta^2}\ar@{-}[r]^{\zeta^3}&\circ^{-1}}$, $\zeta\in\G_{8}'$] $\chi_R(g_R)=1\neq q_{ii}, q_{jj}$.
\end{description}
\smallskip

\eqref{eqn:relacion (m+1)alpha i+m alpha j, caso 2}: again consider each possible generalized Dynkin diagram.
\begin{description}
  \item[$\xymatrix{\circ^{\zeta^4}\ar@{-}[r]^{\zeta^{11}}&\circ^{-1}}$, $\zeta\in\G_{12}'$] $\chi_R(g_R)=\zeta^{10}\neq q_{ii}, q_{jj}$.
  \item[$\xymatrix{\circ^{\zeta^8}\ar@{-}[r]^{\zeta^7}&\circ^{-1}}$, $\zeta\in\G_{12}'$] $\chi_R(g_R)=\zeta^2\neq q_{ii}, q_{jj}$.
  \item[$\xymatrix{\circ^{\zeta^8}\ar@{-}[r]^{\zeta^3}&\circ^{-1}}$, $\zeta\in\G_{24}'$] $\chi_R(g_i)\chi_i(g_R)=\zeta$, $\chi_R(g_j)\chi_j(g_R)=1\neq \widetilde{q_{ij}}$.
  \item[$\xymatrix{\circ^{\zeta^6}\ar@{-}[r]^{\zeta}&\circ^{-1}}$, $\zeta\in\G_{24}'$] $\chi_R(g_R)=\zeta^{15}\neq q_{ii}, q_{jj}$.
  \item[$\xymatrix{\circ^{-\zeta}\ar@{-}[r]^{-\zeta^{12}}&\circ^{\zeta^5}}$, $\zeta\in\G_{15}'$] $\chi_R(g_R)=\zeta^{10}\neq q_{ii}, q_{jj}$.
\end{description}
\smallskip

\eqref{eqn:relacion con 2alpha i+alpha j, caso 2}: the unique diagram is $\xymatrix{\circ^{\zeta^3}\ar@{-}[r]^{\zeta^8}&\circ^{-1}}$, $\zeta\in\G_{9}'$, and $\chi_R(g_R)=-\zeta^6\neq q_{ii},q_{jj}$.
\smallskip

\eqref{eqn:relacion (m+1)alpha i+m alpha j, caso 3}: we consider each possible generalized Dynkin diagram.
\begin{description}
  \item[$\xymatrix{\circ^{\zeta}\ar@{-}[r]^{\zeta^2}&\circ^{-1}}$, $\zeta\in\G_{5}'$] $\chi_R(g_R)=1\neq q_{ii}, q_{jj}$.
  \item[$\xymatrix{\circ^{\zeta}\ar@{-}[r]^{\zeta^{17}}&\circ^{-1}}$, $\zeta\in\G_{20}'$] $\chi_R(g_R)=\zeta^5\neq q_{ii}, q_{jj}$.
  \item[$\xymatrix{\circ^{\zeta^{11}}\ar@{-}[r]^{\zeta^7}&\circ^{-1}}$, $\zeta\in\G_{20}'$] $\chi_R(g_R)=\zeta^{15}\neq q_{ii}, q_{jj}$.
  \item[$\xymatrix{\circ^{\zeta^3}\ar@{-}[r]^{-\zeta^4}&\circ^{-\zeta^{11}}}$, $\zeta\in\G_{15}'$] $\chi_R(g_R)=\zeta\neq q_{ii}, q_{jj}$.
  \item[$\xymatrix{\circ^{\zeta^5}\ar@{-}[r]^{-\zeta^{13}}&\circ^{-1}}$, $\zeta\in\G_{15}'$] $\chi_R(g_R)=\zeta^{10}\neq q_{ii}, q_{jj}$.
\end{description}
\smallskip

\eqref{eqn:relacion con 2alpha i+alpha j, caso 1}: the unique diagram is $\xymatrix{\circ^{\zeta^3}\ar@{-}[r]^{-\zeta^2}&\circ^{-1}}$, $\zeta\in\G_{9}'$, and $\chi_R(g_R)=\zeta^9\neq q_{ii},q_{jj}$.
\end{proof}

\begin{thm}\label{thm:homzero}
Suppose $V$ is an object in $\ydg$ such that its Nichols algebra has a finite root system. Then $\Hom^{\kk\Gamma}_{\kk\Gamma}(\kk\cR_V,V)=0$.
\end{thm}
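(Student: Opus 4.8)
The plan is to use that any morphism in $\ydg$ must preserve the grading by $\Gamma\times\widehat{\Gamma}$, which reduces the desired vanishing to a statement comparing the degrees of the defining relations with the degrees of the generators $x_t$ --- precisely the content of Proposition \ref{prop:gchi different}.

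First I would record the bigraded structure of the two objects. Since $\Gamma$ is abelian, the relevant objects of $\ydg$ are $\Gamma\times\widehat{\Gamma}$-graded vector spaces, and a morphism $f$, being both grading-preserving and $\Gamma$-equivariant, sends the homogeneous component of any degree $(g,\chi)$ into the component of the same degree. The target $V=\bigoplus_{t=1}^\theta\kk x_t$ has $x_t\in V^{\chi_t}_{g_t}$, so its component of degree $(g,\chi)$ is $\mathrm{span}\{x_t\;|\;(g_t,\chi_t)=(g,\chi)\}$. The source $\kk\cR_V$ is a Yetter--Drinfeld submodule of $T(V)$ each of whose homogeneous components has degree $(g_R,\chi_R)$ for some $R\in\cR_V$, because every relation $R$ is $\Z^\theta$-homogeneous and hence lies in $T(V)^{\chi_R}_{g_R}$.

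It then suffices to check that for every $R\in\cR_V$ the target component $V^{\chi_R}_{g_R}$ is zero, i.e. that $(g_R,\chi_R)\neq(g_t,\chi_t)$ for all $t\in\{1,\ldots,\theta\}$; this is exactly Proposition \ref{prop:gchi different}. Granting it, $f$ annihilates every homogeneous component of $\kk\cR_V$, and therefore $f=0$, giving $\Hom^{\kk\Gamma}_{\kk\Gamma}(\kk\cR_V,V)=0$. The entire difficulty is thus concentrated in Proposition \ref{prop:gchi different}, whose own proof is the long case-by-case verification against Heckenberger's classification of finite-dimensional diagonal Nichols algebras \cite{H-classif}; once that proposition is in hand, the deduction of the theorem is immediate and purely formal.
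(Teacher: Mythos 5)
Your proposal is correct and follows exactly the paper's own argument: a morphism in $\ydg$ sends each relation $R$ into $V^{\chi_R}_{g_R}$, which vanishes by Proposition \ref{prop:gchi different}, so $f=0$. You have merely spelled out the bigrading bookkeeping that the paper leaves implicit; the substance is identical.
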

\begin{proof}
If $f\in \Hom^{\kk\Gamma}_{\kk\Gamma}(\kk\cR_V,V)$ and $R\in\cR_V$, then $f(R)\in V^{\chi_R}_{g_R}$.
By Proposition \ref{prop:gchi different}, $V^{\chi_R}_{g_R}=0$ for each $R\in\cR_V$, so $f=0$.
\end{proof}

\begin{thm}\label{thm:diagonal}
If $\cB(V)$ is a Nichols algebra of diagonal type with finite root system then $\cB(V)$ does not admit nontrivial graded deformations as a braided bialgebra.
\end{thm}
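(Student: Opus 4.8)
The plan is to deduce this from the general rigidity criterion of Theorem~\ref{thm:suff_cond} together with the vanishing result of Theorem~\ref{thm:homzero}. First I would fix a realization of $(V,c)$ as an object of $\ydg$ for a suitable abelian group $\Gamma$, as in the preliminaries: choose $g_i\in\Gamma$ and $\chi_i\in\widehat\Gamma$ with $\chi_j(g_i)=q_{ij}$, declare $x_i\in V^{\chi_i}_{g_i}$, and arrange that $\Gamma$ is generated by the $g_i$ while the $\chi_i$ separate points of $\Gamma$. With this choice a linear map commutes with the braiding with $V$ precisely when it is a morphism in $\ydg$, so every graded deformation of $\cB(V)$ as a braided bialgebra is automatically a graded deformation in the category $\V=\ydg$; it therefore suffices to establish rigidity in $\ydg$.

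Next I would set $R=\kk\cR_V\subset T(V)_{(2)}$, the span of the defining relations listed in Theorem~\ref{thm:presentacion minima}. Since each relation is $\Z^\theta$-homogeneous of degree at least two, $R$ is a graded subobject of $T(V)$ in $\ydg$, and the ideal it generates is the defining biideal $\cI(V)$ of the Nichols algebra, so that $\cB(V)=T(V)/\langle R\rangle$ as graded braided bialgebras. To invoke Theorem~\ref{thm:suff_cond} I must verify its two hypotheses. The splitting of $\m\colon\cB^+\otimes_\cB\cB^+\to(\cB^+)^2$ in $\ydg$ holds by Remark~\ref{split}, because $\cB(V)$ is a quotient of $T(V)$ and hence a direct sum of its one-dimensional subobjects in $\ydg$.

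The cohomological input is the requirement $\Hom(R,P(\cB))_\ell=0$ for all negative $\ell$. Here I would use the defining property of a Nichols algebra that $P(\cB(V))=V$, so that $\Hom(R,P(\cB))$ is exactly $\Hom^{\kk\Gamma}_{\kk\Gamma}(\kk\cR_V,V)$; a homogeneous morphism $R\to V$ of degree $\ell$ can be nonzero only when $\ell=1-d<0$ for a relation of degree $d\ge 2$, so the relevant degrees are automatically negative. Theorem~\ref{thm:homzero} states that this entire Hom-space vanishes, hence the hypothesis of Theorem~\ref{thm:suff_cond} is met and $\cB(V)$ is rigid in $\ydg$. Combined with the reduction of the first paragraph, this shows that $\cB(V)$ admits no nontrivial graded deformation as a braided bialgebra.

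I expect the only genuine subtlety to lie in the reduction of the first paragraph, namely verifying that an arbitrary braided-bialgebra deformation---whose structure maps a priori only commute with the braiding---can be realized inside $\ydg$. This is exactly where the careful choice of $\Gamma$ (generated by the $g_i$, with separating characters $\chi_i$) is essential, since it is this choice that identifies ``commutes with the braiding with $V$'' with ``is a morphism in $\ydg$.'' Everything else is a direct assembly of the preceding results.
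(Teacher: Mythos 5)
Your proposal is correct and follows essentially the same route as the paper's own proof: fix a realization in $\ydg$ with $\Gamma$ generated by the $g_i$ and the $\chi_i$ chosen so that morphisms commuting with the braiding are exactly $\ydg$-morphisms, then combine Theorem~\ref{thm:homzero} and Remark~\ref{split} to satisfy the hypotheses of Theorem~\ref{thm:suff_cond}, and finally use the choice of realization to transfer rigidity in $\ydg$ to rigidity as a braided bialgebra. The extra details you supply (identifying $R=\kk\cR_V$ via Theorem~\ref{thm:presentacion minima}, noting $P(\cB(V))=V$, and observing that the relevant degrees are automatically negative) are exactly what the paper leaves implicit.
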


\begin{proof}
We fix a realization of $(V,c)$ in $\ydg$ where $\Gamma$ is an abelian group.
Without loss of generality, we may assume that the $g_i$'s generate $\Gamma$ and the $\chi_i$'s generate $\widehat{\Gamma}$.
By Theorem \ref{thm:homzero} and Remark \ref{split}, the conditions needed to invoke Theorem \ref{thm:suff_cond} are satisfied, so $\cB(V)$ does not admit nontrivial
graded deformations in $\ydg$. But our choice of realization ensures that any graded deformation of $\cB(V)$ is in $\ydg$ and hence must be trivial.
\end{proof}

\section{Examples}\label{s:other}

\subsection{Positive parts of quantum groups}
It is well known that, in the generic case, the positive part of a quantized enveloping algebra is a Nichols algebra of diagonal type.
By Theorem \ref{thm:diagonal}, these positive parts are rigid.
More generally, this applies to the ``diagram'' of the pointed Hopf algebra $U(\cD)$ associated to a generic datum $\cD$ of finite Cartan type --- see \cite{ASqg},
where it is shown that any pointed Hopf algebra whose group-like elements form a finitely generated abelian group is isomorphic to some $U(\cD)$
if it is a domain with finite Gelfand-Kirillov dimension and its infinitesimal braiding is positive.

\subsection{Distinguished pre-Nichols algebras}
These are infinite-dimensional braided Hopf algebras projecting onto the corresponding finite-dimensional Nichols algebras.
They were formally defined in \cite[Definition 3.1]{A-prenichols} generalizing the situation with quantum groups at roots of unity and the corresponding small quantum groups.
Let $V$ be a braided vector space of diagonal type such that $\cB(V)$ is finite-dimensional. Then the distinguished pre-Nichols algebra
$\widetilde{\cB}(V)$ is the quotient of $T(V)$ by the relations in Theorem \ref{thm:presentacion minima} except the powers of root vectors \eqref{eqn:potencia raices}.
As a consequence of Theorem \ref{thm:homzero}, we have:

\begin{thm}\label{thm:diagonal-pre-Nichols}
Let $(V,c)$ be a braided vector space of diagonal type such that $\cB(V)$ is finite-dimensional.
Then $\widetilde{\cB}(V)$ does not admit nontrivial graded deformations as a braided bialgebra.\qed
\end{thm}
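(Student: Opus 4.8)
The plan is to mirror the proof of Theorem~\ref{thm:diagonal}, replacing the full set of defining relations by the smaller one presenting $\widetilde{\cB}(V)$ and accounting for the fact that a distinguished pre-Nichols algebra has strictly more primitive elements than its Nichols quotient. First I would fix a realization of $(V,c)$ in $\ydg$ with $\Gamma$ abelian, chosen (as in the proof of Theorem~\ref{thm:diagonal}) so that $g_1,\dots,g_\theta$ generate $\Gamma$ and the characters $\chi_1,\dots,\chi_\theta$ separate points of $\Gamma$; this guarantees, by the same mechanism as in Theorem~\ref{thm:diagonal}, that any graded deformation of $\widetilde{\cB}(V)$ as a braided bialgebra is a deformation in $\ydg$. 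Writing $\widetilde{\cB}(V)=T(V)/\langle\widetilde{\cR}_V\rangle$ with $\widetilde{\cR}_V=\cR_V\setminus\{x_\alpha^{N_\alpha}:\alpha\in\cO(V)\}$, I would then apply Theorem~\ref{thm:suff_cond}; its splitting hypothesis holds by Remark~\ref{split}, since $\widetilde{\cB}(V)$ is a quotient of $T(V)$ and hence a direct sum of one-dimensional subobjects in $\ydg$. Thus rigidity reduces to proving
\[
\Hom^{\kk\Gamma}_{\kk\Gamma}\bigl(\kk\widetilde{\cR}_V,\,P(\widetilde{\cB}(V))\bigr)_\ell=0\qquad\text{for all }\ell<0.
\]

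The one structural input beyond the finite-dimensional case is the space of primitives. By \cite{A-prenichols} each removed power $x_\alpha^{N_\alpha}$, $\alpha\in\cO(V)$, becomes a central primitive element, so that
\[
P(\widetilde{\cB}(V))=V\oplus\bigoplus_{\alpha\in\cO(V)}\kk\, x_\alpha^{N_\alpha}.
\]
I would treat the two summands separately. The summand $V$ is handled exactly as before: since $\widetilde{\cR}_V\subseteq\cR_V$, Proposition~\ref{prop:gchi different} gives $V^{\chi_R}_{g_R}=0$ for every $R\in\widetilde{\cR}_V$, whence $\Hom^{\kk\Gamma}_{\kk\Gamma}(\kk\widetilde{\cR}_V,V)=0$ just as in Theorem~\ref{thm:homzero}. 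This disposes of the part of the argument inherited verbatim from the finite-dimensional situation.

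It remains to exclude nonzero $\ydg$-morphisms $\kk\widetilde{\cR}_V\to\kk\, x_\alpha^{N_\alpha}$ for $\alpha\in\cO(V)$. Such a morphism forces the Yetter-Drinfeld degrees to coincide, $(g_R,\chi_R)=(g_\alpha^{N_\alpha},\chi_\alpha^{N_\alpha})$, where $g_\alpha=\prod_i g_i^{a_i}$ and $\chi_\alpha=\prod_i\chi_i^{a_i}$ for $\alpha=\sum_i a_i\alpha_i$. Two observations prune most candidates. First, since $q_\alpha$ has order $N_\alpha$, we get $\chi_\alpha^{N_\alpha}(g_\alpha^{N_\alpha})=q_\alpha^{N_\alpha^2}=1$, so any relation $R$ with $\chi_R(g_R)\neq1$ is excluded at once; this removes the overwhelming majority of the relations of Theorem~\ref{thm:presentacion minima}. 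Second, a nonzero morphism of \emph{negative} degree requires $\deg(x_\alpha^{N_\alpha})<\deg R$, which restricts the Cartan roots $\alpha$ that can occur to those of small height. The main obstacle, and the only genuinely new computation relative to Theorem~\ref{thm:diagonal}, is precisely the relations admitting $\chi_R(g_R)=1$ (such as \eqref{eqn:potencia raices simples}, \eqref{eqn:relacion dos vertices con -1}, \eqref{eqn:relacion super C3}\vv, \eqref{eqn:relacion super F4-2}\vii, \eqref{eqn:relacion especial rank3}, \eqref{eqn:relacion potencia alta}, and a few others occurring for special diagrams): for these the self-pairing no longer separates them from the root powers, so one must run a case analysis in the spirit of Proposition~\ref{prop:gchi different}, now comparing $(g_R,\chi_R)$ against $(g_\alpha^{N_\alpha},\chi_\alpha^{N_\alpha})$ instead of against $(g_t,\chi_t)$, and using the classification of generalized Dynkin diagrams in \cite{H-classif}. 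Once this finite (but genuinely new) verification is complete, the displayed Hom vanishes and Theorem~\ref{thm:suff_cond} yields the rigidity of $\widetilde{\cB}(V)$.
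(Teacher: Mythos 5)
Your overall skeleton coincides with the paper's: the paper obtains this theorem by noting that $\widetilde{\cB}(V)=T(V)/\langle\widetilde{\cR}_V\rangle$ with $\widetilde{\cR}_V\subseteq\cR_V$ and presenting the statement as an immediate consequence of Theorem \ref{thm:homzero}, via Remark \ref{split} and Theorem \ref{thm:suff_cond}, with the realization chosen exactly as in Theorem \ref{thm:diagonal} (no further argument is written). Where you go beyond the paper is in insisting -- correctly -- that the hypothesis of Theorem \ref{thm:suff_cond} involves $\Hom(\kk\widetilde{\cR}_V,P(\widetilde{\cB}(V)))_\ell$ for the primitives of the \emph{quotient}, and that $P(\widetilde{\cB}(V))$ is strictly larger than $V$ whenever $\cO(V)\neq\emptyset$. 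But having raised this point, you do not settle it: the exclusion of bidegree coincidences $(g_R,\chi_R)=(g_\alpha^{N_\alpha},\chi_\alpha^{N_\alpha})$ for the relations with $\chi_R(g_R)=1$ is explicitly deferred to a case analysis that you describe but never perform. That verification is the entire content of what is new here; it is of the same nature and scale as Proposition \ref{prop:gchi different} (a relation-by-relation run through the classification of \cite{H-classif}, now compared against every Cartan root $\alpha$ whose power $x_\alpha^{N_\alpha}$ has total degree smaller than $\deg R$), and neither of your two pruning observations makes it automatic -- after pruning, relations such as \eqref{eqn:potencia raices simples}, \eqref{eqn:relacion dos vertices con -1} and \eqref{eqn:relacion potencia alta} still have to be checked against the Cartan roots of the corresponding diagrams. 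As written, the attempt is an outline whose decisive step is missing, so it does not prove the theorem.

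There is also an error in the one structural input you add. The equality $P(\widetilde{\cB}(V))=V\oplus\bigoplus_{\alpha\in\cO(V)}\kk\,x_\alpha^{N_\alpha}$ is not what \cite{A-prenichols} proves, and it is false in general: that paper shows the elements $x_\alpha^{N_\alpha}$ generate a central Hopf subalgebra $Z(V)$, namely the coinvariants of the projection $\widetilde{\cB}(V)\to\cB(V)$, but for non-simple Cartan roots these elements are typically \emph{not} primitive. Already in Cartan type $A_2$ one has $\Delta(x_{12}^{N})=x_{12}^{N}\ot 1+1\ot x_{12}^{N}+c\,x_1^{N}\ot x_2^{N}$ with $c\neq 0$; equivalently, $Z(V)$ is the function algebra of a non-abelian unipotent group, so its generators cannot all be primitive. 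What your argument actually needs, and what is true, is: (i) every primitive of degree at least $2$ is a coinvariant and hence lies in $Z(V)$, and (ii) homogeneous primitives of the connected graded commutative Hopf algebra $Z(V)$ inject into its indecomposables and therefore sit in $\Z^\theta$-degrees of the form $N_\alpha\alpha$. These facts recover exactly your list of target bidegrees $(g_\alpha^{N_\alpha},\chi_\alpha^{N_\alpha})$, but they must be stated and justified rather than attributed to \cite{A-prenichols}. For comparison, the paper's own proof is even terser: it invokes only Theorem \ref{thm:homzero}, i.e. the vanishing of morphisms into $V$, and is silent on the enlarged primitive space -- so the subtlety you identified is genuine, but identifying it without closing it leaves the theorem unproven on your account as well.
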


\subsection{Nichols algebras over dihedral groups} Let $D_m$ denote the dihedral group of order $2m$.
For odd $m$, it is not known whether the category of Yetter-Drinfeld modules over $D_m$ has any finite-dimension Nichols algebras.
For even $m\ge 4$, the only known finite-dimensional Nichols algebras have a symmetric braiding \cite{FG}, so Theorem \ref{thm:symmetric} applies.

\subsection{Nichols algebras over symmetric groups}

Let $n\geq3$. The quadratic algebra $\mathcal{FK}_{n}$, introduced by Fomin and Kirillov \cite{FK}, is presented by generators $x_{(ij)}$, $1\leq i<j\leq n$,
and relations
\begin{align*}
x_{(ij)}^{2}&=0,&  &1\leq i<j\leq n,\\
x_{(ij)}x_{(jk)}&=x_{(jk)}x_{(ik)}+x_{(ik)}x_{(ij)},&  &1\leq i<j<k\leq n,\\
x_{(jk)}x_{(ij)}&=x_{(ik)}x_{(jk)}+x_{(ij)}x_{(ik)},&  &1\leq i<j<k\leq n,\\
x_{(ij)}x_{(kl)}&=x_{(kl)}x_{(ij)},&  & \# \{i,j,k,l\}=4.
\end{align*}
Milinski and Schneider \cite{MiS} showed how to make $\mathcal{FK}_n$ a graded bialgebra in the category of Yetter-Drinfeld modules over the symmetric group $S_n$.
As an algebra, it is generated by the vector space $V_n$ with basis $\{x_{(ij)}\mid 1\leq i<j\leq n\}$. Identifying $(ij)$ with the corresponding transposition in $S_n$,
we can make $V_n$ a Yetter-Drinfeld module where the coaction is defined by declaring $x_\sigma$ a homogeneous element of degree $\sigma$, and the action is the
conjugation twisted by the sign. The corresponding braiding on $V_n$ is given by
\[
c(x_\sigma\otimes x_\tau)=\chi(\sigma,\tau)x_{\sigma\tau\sigma^{-1}}\otimes x_\sigma,\quad\quad
 \chi(\sigma,\tau)=
 \begin{cases}
 	1 & \sigma(i)<\sigma(j), \tau=(ij), \, i<j,\\
  -1 & \text{otherwise,}
\end{cases}
\]
where $\sigma$ and $\tau$ are transpositions. Then the above relations generate a biideal in the (braided) tensor bialgebra $T(V_n)$.

It is easy to see that $\mathcal{FK}_n$ projects onto the Nichols algebra $\cB(V_n)$. For $n=3,4,5$, it is known that $\mathcal{FK}_n=\cB(V_n)$ and
has dimension, respectively, $12$, $576$ and $8294400$ (see \cite{MiS} for $n=3,4$ and \cite{Gr} for $n=5$). Milinski and Schneider conjectured that $\mathcal{FK}_n$
coincides with $\cB(V_n)$ for all $n$. Moreover, it has been conjectured that $\dim\mathcal{FK}_n=\infty$ for $n\geq6$ \cite{FK}.

\begin{thm}\label{thm:Fomin-Kirillov}
Let $n\geq 3$. Then $\mathcal{FK}_n$ does not admit nontrivial graded deformations as a braided bialgebra.
\end{thm}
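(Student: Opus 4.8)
The plan is to present $\mathcal{FK}_n$ as a graded bialgebra $\cB=T(V_n)/\langle R\rangle$ in the braided category $\V={}^{\kk S_n}_{\kk S_n}\yd$, where $R\subset V_n\ot V_n$ is the Yetter-Drinfeld submodule spanned by the four families of quadratic relations above, and then to invoke the sufficient condition for rigidity, Theorem \ref{thm:suff_cond}. Since $\cB_0=\kk$, the algebra $\cB$ is generated by $\cB_1=V_n$, and $R$ is concentrated in degree $2$ and generates a biideal (as recorded above), the only hypotheses left to verify are that the multiplication $\m\colon\cB^+\ot_\cB\cB^+\to(\cB^+)^2$ splits in $\V$ and that $\Hom(R,P(\cB))_\ell=0$ for every negative $\ell$.

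The splitting is immediate from Remark \ref{split}: since $\kk$ has characteristic $0$ and $S_n$ is finite, the Hopf algebra $\kk S_n$ is both semisimple and cosemisimple, so $\V$ is a semisimple category and the epimorphism $\m$ splits.

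For the cohomological condition I would argue by a grading computation in the spirit of Theorem \ref{thm:homzero}. A homogeneous morphism of negative degree $\ell$ carries the degree-$2$ object $R$ into $P(\cB)_{2+\ell}$; as $P(\cB)_0=0$ and $P(\cB)_1=\cB_1=V_n$, the only value of $\ell$ that can contribute is $\ell=-1$, with target $V_n$. Now the $S_n$-coaction records the support: $V_n$ is supported on the conjugacy class of transpositions, whereas a glance at the defining relations shows that $R$ lives entirely off this class. Indeed $x_{(ij)}^2$ is homogeneous of degree $e$, both triangle relations are homogeneous of degree a $3$-cycle, and $x_{(ij)}x_{(kl)}-x_{(kl)}x_{(ij)}$ is homogeneous of degree $(ij)(kl)$, a product of two disjoint transpositions. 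Since a morphism in $\V$ preserves the $S_n$-grading and none of these group elements is a transposition, any morphism $R\to V_n$ must vanish; hence $\Hom(R,P(\cB))_\ell=0$ for all $\ell<0$.

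Theorem \ref{thm:suff_cond} then yields that $\mathcal{FK}_n$ admits no nontrivial graded deformations in $\V$. It remains to pass from rigidity in $\V$ to rigidity as a braided bialgebra: because the braiding of $\mathcal{FK}_n$ is determined by $c$ on the finite-dimensional space $V_n$ and is kept fixed throughout a graded deformation, the realization of $(V_n,c)$ in ${}^{\kk S_n}_{\kk S_n}\yd$ confines any such deformation to $\V$, exactly as in the proof of Theorem \ref{thm:diagonal}, so it must be trivial. The grading bookkeeping, though it has to be carried out for each family of relations, is entirely routine; the one point that genuinely requires care is this last identification, namely that fixing the braiding really does keep the deformation inside the Yetter-Drinfeld category.
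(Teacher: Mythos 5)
Your proof is correct and takes essentially the same approach as the paper: the paper's (very terse) proof consists precisely of your two observations --- all relations sit in degree $2$ and their $S_n$-coaction avoids the conjugacy class of transpositions, whereas the only primitives in degree $<2$ are the degree-one elements, whose coaction is given by transpositions --- so that Theorem \ref{thm:suff_cond}, with the splitting supplied by Remark \ref{split}, applies. The one place you go beyond the paper is the closing paragraph on confining a stand-alone braided deformation to ${}^{\kk S_n}_{\kk S_n}\yd$; the paper leaves that passage implicit, and be aware that the argument of Theorem \ref{thm:diagonal} you appeal to rests on the diagonal/abelian-group fact that commuting with the braiding is equivalent to being a Yetter--Drinfeld morphism (for a suitable $\Gamma$), which the paper does not establish for $S_n$, so that step does not transfer verbatim and would still require its own justification.
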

\begin{proof}
All relations are in degree $2$ and cannot have coaction given by transposition.  As the only primitives in degrees smaller than $2$ are in degree $1$ and have coaction given by transpositions, the assumption of Theorem \ref{thm:suff_cond} is satisfied and these algebras are rigid.
\end{proof}

\end{document}